\newtheorem{thm}{Theorem}[section]
\newtheorem{lem}[thm]{Lemma}
\newtheorem{remark}[thm]{Remark}
\newenvironment{rem}{\begin{remark}\rm}{\end{remark}}
\newtheorem{prop}[thm]{Proposition}
\newtheorem{Definition}[thm]{Definition}
\newenvironment{dfn}{\begin{Definition}\rm}{\end{Definition}}
\newtheorem{Corollary}[thm]{Corollary}
\newtheorem{Example}{Example}[section]
\newenvironment{eg}{\begin{Example}\rm}{\end{Example}}
\renewcommand{\H}{\mathbb{H}}
\newcommand{\D}{\mathcal{D}}
\newcommand{\R}{\mathbb{R}}
\newcommand{\dd}{\,\mathrm{d}}
\newcommand{\ve}{\varepsilon}
\newcommand{\M}{\mathcal{M}}
\newcommand{\kk}[2]{\langle #1, #2 \rangle_K}
\newcommand{\F}{\mathcal{F}}
\renewcommand{\S}{\mathcal{S}}
\title{Convergence analysis of approximation formulas for analytic functions via duality for potential energy minimization}
\author[1]{Satoshi Hayakawa\footnote{hayakawa@maths.ox.ac.uk}}
\author[2]{Ken'ichiro Tanaka\footnote{kenichiro@mist.i.u-tokyo.ac.jp}}
\affil[1]{Mathematical Institute, University of Oxford}
\affil[2]{
Department of Mathematical Informatics,
Graduate School of Information Science and Technology,
The University of Tokyo}
\date{}
\providecommand{\keywords}[1]{\textbf{\textit{Keywords---}} #1}
\begin{document}

\maketitle

\begin{abstract}
	We investigate the approximation formulas
	that were proposed by 
	\citeasnoun{tanaka2018design},
	in weighted Hardy spaces, which are analytic function spaces with certain asymptotic decay.
	Under the criterion of minimum worst error of $n$-point approximation formulas,
	we demonstrate that the formulas are nearly optimal.
	We also obtain the upper bounds of the approximation errors
	that coincide with the existing heuristic bounds in asymptotic order
	by duality theorem for the minimization problem of potential energy.
	
	\noindent\keywords{approximation, weighted Hardy space, convex optimization, duality, discrete energy minimization}
\end{abstract}

\section{Introduction}\label{sec:introduction}


By taking over the arguments of \citeasnoun{tanaka2017potential}, 
\citeasnoun{tanaka2018design} 
proposed an algorithm to design accurate approximation formulas
in function spaces called weighted Hardy spaces defined by 
\begin{align}
	\H^\infty(\D_d, w)
	:=\left\{
		f:\D_d\to\mathbb{C}
		\,\middle|\,
		\text{$f$ is analytic on $\D_d$},\ \sup_{z\in\D_d}\left|\frac{f(z)}{w(z)}\right|<\infty
	\right\},
	\label{eq:Hinf_first}
\end{align}
where $d > 0$, $\D_d:=\{z\in\mathbb{C}\mid |\mathop{\mathrm{Im}}z|<d\}$, and
$w$ is a weight function characterized later in Section~\ref{2-1}.  
The spaces $\H^\infty(\D_d, w)$ are often considered as spaces of transformed functions 
for well-used sinc approximation formulas shown later in \eqref{eq:sinc_approx}. 
The objective of \citeasnoun{tanaka2017potential} and \citeasnoun{tanaka2018design} 
was to provide formulas ourperforming the sinc formulas. 
However, 
their studies only provided heuristic analyses on the proposed formulas without any theoretical guarantees, 
although their methods have shown superiority to the sinc approximation formulas. 
In this study,
we mathematically
\begin{itemize}
	\item[(1)]
		prove near optimality of the formulas, and
	\item[(2)]
		provide a general upper bound of the errors of the proposed formulas 
		and show that the bound coincides in asymptotic order with the heuristic bound derived
		by \citeasnoun{tanaka2017potential}.
\end{itemize}


Below we describe the background of this study more precisely.  
The spaces $\H^\infty(\D_d, w)$ appear in literature
as spaces of variable-transformed functions
\cite{stenger1993,stenger2011,sugihara2003,tanaka2009function}.
For example,
the double exponential (DE) transform, which is well-used in numerical analysis
\cite{takahashi1974double},
has the form
\[
	f(x)=g\left(\tanh\left(\frac\pi2\sinh(x)\right)\right)
\]
and shows a double-exponential decay.
Also, TANH transform $g(\tanh(x/2))$ is commonly used
\cite{schwartz1969numerical,haber1977tanh}.
These variable transformations are employed for the accurate approximation of functions
by yielding functions with rapid decay on $\D_d$,
which enables us to neglect the values of the functions for large $|x|$.
This motivates us to analyze the approximation possibility over weighted Hardy spaces
with general weight functions $w$.
After \citeasnoun{sugihara2003} demonstrated near optimality
of sinc approximation formulas
\begin{align}
f(x) \approx \sum_{k=N_-}^{k=N_+}f(kh)\mathop\mathrm{sinc}\left(\frac{x}h-k\right)
\label{eq:sinc_approx}
\end{align}
for several weight functions $w$,
attempts to construct an optimal formula
for general weight functions
was started in the literature.

For this purpose, \citeasnoun{tanaka2017potential} employed potential theoretical arguments
to generate sampling points for the approximation of functions.
Furthermore, \citeasnoun{tanaka2018design} simplified the arguments
and proposed accurate formulas $L_n[a^{\ast};f](x)$ given later by~\eqref{eq:0-2-1} with special sets $a^{\ast}$ of sampling points. 
The formulas $L_n[a^{\ast};f](x)$ outperform the sinc methods 
for functions $f \in \H^\infty(\D_d, w)$. 
The authors showed that 
\begin{align}
	\sup_{\|f\|\le1,\ x\in\R}|f(x)-L_n[a^*;f](x)|
	\le \exp\left(-\frac{F^\mathrm{D}_{K, Q}(n)}{n-1}\right),
	\label{eq:RH_first}
\end{align}
where 
$\| f \|$ is a norm of $f \in \H^\infty(\D_d, w)$ and  
$F^\mathrm{D}_{K, Q}(n)$ is determined later in~\eqref{eq:def_disc_pot} by a ``discrete'' energy minimization problem.
Furthermore, 
they considered 
the minimum worst error $E_n^{\min}(\H^\infty(\D_d, w))$ 
in \eqref{eq:def_min_wor_err}
of $n$-point approximation formulas in $\H^\infty(\D_d, w)$
and evaluated it as 
\begin{align}
	\exp\left(-\frac{F^\mathrm{C}_{K, Q}(n)}n\right)
	\le E_n^{\min}(\H^\infty(\D_d, w)), 
\label{eq:min_wor_err_lb_first}
\end{align}
where $F^\mathrm{C}_{K, Q}(n)$ is determined later in~\eqref{eq:def_cont_pot} by a ``continuous counterpart'' of the above energy minimization problem. 
The following problems about the formula $L_n[a^*;f](x)$
were unsolved in \citeasnoun{tanaka2018design}. 
\begin{enumerate}
\item[(i)]
Since 
(the RHS of \eqref{eq:min_wor_err_lb_first})$\, \le \,$(the LHS of \eqref{eq:RH_first}), 
the formula $L_n[a^*;f](x)$ is assured of ``near optimality''
if $F^\mathrm{C}_{K, Q}(n)$ and $F^\mathrm{D}_{K, Q}(n)$ are close.
However, 
their difference was not estimated. 

\item[(ii)]
To estimate the convergence rate of the error in the LHS of \eqref{eq:RH_first}, 
we need to know how $F^\mathrm{D}_{K, Q}(n)$ depends on $n$. 
However, it was not known. 

\end{enumerate}

In this paper, 
we provide solutions to these problems.
Our contributions (1) and (2) mentioned in the first paragraph of this section 
correspond to the solutions to problems (i) and (ii),  respectively. 
More precisely, 
we show the following statements. 
\begin{enumerate}
\item[(1)]
We show an evaluation like
\[
	F_{K, Q}^\mathrm{D}(n)\lesssim F_{K, Q}^\mathrm{C}(n)\lesssim 2 F_{K, Q}^\mathrm{D}(n).
\]
Its rigorous version is given by Theorem~\ref{main1} in Section~\ref{sec:main_results}. 
The quantities $F^\mathrm{D}_{K, Q}(n)$ and $F^\mathrm{C}_{K, Q}(n)$
were obtained from the optimal solutions of the ``discrete'' energy minimization problem and its ``continuous counterpart'',
respectively. 
Therefore we construct a feasible solution for the latter using the optimal solution of the former to show this theorem. 

\item[(2)]
We show an inequality
	\[
		\frac{F_{K, Q}^\mathrm{C}(n)}{n}\ge\frac{Q(\alpha_n)}2, 
	\]
where 
$Q(x) = - \log w(x)$ and $\alpha_{n}$ is determined by a tractable inequality. 
Its details are given by Theorem~\ref{main2} in Section~\ref{sec:main_results}. 
By combining this inequality, the above statement (1), and Inequality~\eqref{eq:RH_first}, 
we obtain explicit convergence rates of the proposed formulas. 
To show this theorem, 
we consider the dual problem of the ``continuous'' energy minimization problem and 
provide its feasible solution. 
For preparation, we present a primal-dual theory of the energy minimization problem
in Section~\ref{sec:duality_theorem}. 

\end{enumerate}
As a result, 
we explicitly obtain lower bounds of $F_{K, Q}^\mathrm{C}(n)$
and demonstrate that
the rates of lower bounds coincide with those of heuristic bounds in \citeasnoun{tanaka2017potential}.

The rest of this paper is organized as follows. 
In Section \ref{sec2},
we present a mathematical overview of the existing studies
and describe our main results as mathematical statements.
Section \ref{sec3} describes the proof of the first result,
i.e., Theorem~\ref{main1}.
Section \ref{sec:duality_theorem} contains general arguments,
which introduce the concept of ``positive semi-definite in measure''.
Then, we show that the problem under our interest is a special case of that concept
and derive the duality theorem.
The evaluations for the second result,
described by Theorem~\ref{main2},
are given in Section \ref{sec5}.
We compare the bounds with those in \citeasnoun{tanaka2017potential} in Section \ref{sec:numerical_examples}. 
Finally, we describe the concluding remarks in Section \ref{sec:conclusion}.


\section{Mathematical preliminaries and main results}\label{sec2}

\subsection{General settings}\label{2-1}

We first give some definitions and formulate the problem mathematically.
Let $d>0$ and define the strip region $\D_d:=\{z\in\mathbb{C}\mid |\mathop{\mathrm{Im}}z|<d\}$.
Throughout this paper,
a {\it weight function} $w:\D_d\to\mathbb{C}$ is supposed to satisfy the following conditions:
\begin{enumerate}
	\item
		$w$ is analytic and does not vanish over the domain $\D_d$
		and takes values in $(0, 1]$ on $\R$;
	\item
		$w$ satisfies $\lim_{x\to\pm\infty}\int_{-d}^d|w(x+iy)|\dd y=0$
		and $\lim_{y\nearrow d}\int_{-\infty}^\infty (|w(x+iy)|+|w(x-iy)|)\dd x<\infty$;
	\item
		$\log w$ is strictly concave on $\R$.
\end{enumerate}
For a weight function with the above conditions,
we define the weighted Hardy space $\H^\infty(\D_d, w)$ on $\D_d$ in \eqref{eq:Hinf_first}.
Recall that it is defined by
\begin{align}
	\H^\infty(\D_d, w)
	:=\left\{
		f:\D_d\to\mathbb{C}
		\,\middle|\,
		\text{$f$ is analytic on $\D_d$},\ \sup_{z\in\D_d}\left|\frac{f(z)}{w(z)}\right|<\infty
	\right\}.
	\label{eq:Hinf}
\end{align}
We define
\[
	\|f\|:=\sup_{z\in\D_d}\left|\frac{f(z)}{w(z)}\right|
\]
for $f\in\H^\infty(\D_d,w)$,
and the expression $\|f\|<\infty$ shall also imply $f\in\H^\infty$ in the following.

For an approximation formula over $\H^\infty(\D_d, w)$,
an evaluation criterion needs to be defined.
Based on \citeasnoun{sugihara2003} and \citeasnoun{tanaka2018design},
we adopt the minimum worst-case error
\begin{align}
	&E_n^{\min} (\H^\infty(\D_d, w)) \notag \\
	&:=\inf\left\{
		\sup_{\|f\|\le 1,\ x\in\R}\left|
			f(x)-\sum_{j=1}^l\sum_{k=0}^{n_j-1}f^{(k)}(a_j)\phi_{jk}(x)
		\right|
		\,\middle|\,
		\begin{array}{c}
			1\le l\le n,\ m_1+\cdots+m_l=n,\\
			a_j\in\D_d\ \text{are distinct},\\
			\phi_{jk}:\D_d\to\mathbb{C}\ \text{are analytic}
		\end{array}
	\right\}
	\label{eq:def_min_wor_err}
\end{align}
as the optimal performance over all possible $n$-point interpolation formulas on $\R$,
which is applicable to any $f\in\H^\infty(\D_d, w)$.

\subsection{Properties of approximation formulas to be analyzed}

Let us introduce some functions dependent on
an $n$-sequence $a=\{a_j\}_{j=1}^n\subset\R$ as follows.
\begin{align*}
	T_d(x)&:=\tanh\left(\frac\pi{4d}x\right),\\
	B_n(x;a,\D_d)&:=\prod_{j=1}^n\frac{T_d(x)-T_d(a_j)}{1-T_d(a_j)T_d(x)},\\
	B_{n;k}(x, a, \D_d)&:=\prod_{\substack{1\le j\le n,\\
	j\ne k}}\frac{T_d(x)-T_d(a_j)}{1-T_d(a_j)T_d(x)}.
\end{align*}
Using these functions,
we can give an $n$-point interpolation formula
\begin{align}
	L_n[a;f](x):=\sum_{k=1}^nf(a_k)\frac{B_{n;k}(x; a,\D_d)w(x)}{B_{n;k}(a_k; a,\D_d)w(a_k)}
	\frac{T_d'(x-a_k)}{T_d'(0)},
	\label{eq:0-2-1}
\end{align}
which is known to characterize the value $E_n^{\min}(\H^\infty(\D_d, w))$
as follows.
\begin{prop}{\rm\cite{sugihara2003,tanaka2018design}}
	We have an upper bound of the error of (\ref{eq:0-2-1}) as
	\[
		\sup_{\|f\|\le 1,\ x\in\R}|f(x)-L_n[a;f](x)|
		\le \sup_{x\in\R}|B_n(x;a,\D_d)w(x)|
	\]
	for any fixed  sequence $a=\{a_j\}_{j=1}^n\subset\R$ (of distinct points).
	Moreover, by taking infimum of the above expression over all $n$-sequences,
	it holds that
	\[
		E_n^{\min}(\H^\infty(\D_d, w))
		=\inf_{a_j\in\R}
		\sup_{\|f\|\le1,\ x\in\R}|f(x)-L_n[a;f](x)|
		=\inf_{a_j\in\R}\sup_{x\in\R}|B_n(x;a,\D_d)w(x)|.
	\]
\end{prop}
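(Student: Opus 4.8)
The plan is to base everything on the observation that $T_d$ maps $\D_d$ conformally onto the unit disk, so that $B_n(\cdot\,;a,\D_d)$ is the pullback of a finite Blaschke product of degree $n$: it is analytic on $\D_d$, has simple zeros exactly at $a_1,\dots,a_n$, satisfies $|B_n|<1$ throughout $\D_d$, and $|B_n|=1$ on the boundary $\partial\D_d=\{\mathrm{Im}\,z=\pm d\}$. For the upper bound I would first set $g:=f/w$, so that $\|f\|\le1$ reads $|g|\le1$ on $\D_d$. Since $w$ does not vanish on $\D_d$ (condition (1)), the quotient $L_n[a;f]/w$ is analytic, and a direct check of (\ref{eq:0-2-1}) shows that $L_n[a;f]$ interpolates $f$ at every node (the off-diagonal terms vanish because $B_{n;k}(a_m)=0$ for $m\ne k$), so the remainder $R(z):=f(z)-L_n[a;f](z)$ vanishes at each $a_k$. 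I would then introduce $u(z):=R(z)/\left(B_n(z)w(z)\right)$ and verify it is analytic on $\D_d$, the simple zeros of $B_n$ cancelling those of the numerator. The crux is the estimate $\sup_{z\in\D_d}|u(z)|\le1$; granting it, for real $x$ one obtains $|R(x)|=|u(x)|\,|B_n(x)w(x)|\le\sup_{t\in\R}|B_n(t)w(t)|$, which is the claim. To bound $u$ I would represent $R$ by a Cauchy-type contour integral over $\partial\D_d$ with kernel assembled from $B_n$ and $T_d$, so that on $\partial\D_d$, where $|B_n|=1$ and $|g|\le1$, the bound follows directly; the decay conditions (2) guarantee convergence and justify closing the contour. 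This boundary and convergence analysis, essentially a Phragm\'en--Lindel\"of argument on the strip controlling $L_n/w$ as $\mathrm{Re}\,z\to\pm\infty$, is the main technical obstacle of this half.

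The middle equality then follows by exhibiting an extremal function. Taking $f^*:=wB_n(\cdot\,;a,\D_d)$ gives $f^*/w=B_n$, hence $\|f^*\|=\sup_{\D_d}|B_n|=1$; and since $B_n(a_k)=0$ we have $f^*(a_k)=0$, so $L_n[a;f^*]\equiv0$. Therefore $\sup_x|f^*(x)-L_n[a;f^*](x)|=\sup_x|B_n(x)w(x)|$, which meets the upper bound and yields $\sup_{\|f\|\le1,\,x}|f-L_n[a;f]|=\sup_x|B_nw|$ for each fixed real sequence $a$. Passing to the infimum over real sequences gives the second equality in the statement.

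For the first equality, the inequality $E_n^{\min}\le\inf_{a\in\R^n}\sup_x|B_nw|$ is immediate, since each $L_n[a;\cdot]$ is an admissible $n$-point formula (the case $l=n$, all multiplicities one, no derivatives). The reverse inequality is the heart of the proposition. Given an arbitrary admissible formula with nodes $a_1,\dots,a_l\in\D_d$ of multiplicities $m_1,\dots,m_l$ summing to $n$, let $\widetilde B$ be the degree-$n$ Blaschke product vanishing to order $m_j$ at each $a_j$, and put $f^*:=w\widetilde B$. Then $\|f^*\|=1$, every datum $f^{*(k)}(a_j)$ with $0\le k<m_j$ vanishes, so the formula returns $0$ on $f^*$, and the resulting worst error is at least $\sup_x|w\widetilde B|$. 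Hence $E_n^{\min}\ge\inf_{\widetilde B}\sup_x|w\widetilde B|$, the infimum taken over all degree-$n$ Blaschke products with arbitrary zeros in $\D_d$.

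It remains to show that this last infimum is not decreased by restricting to real zeros, i.e. $\inf_{\widetilde B}\sup_x|w\widetilde B|=\inf_{a\in\R^n}\sup_x|B_nw|$. I would establish this by a symmetrization/reflection argument on the zeros: moving the zeros of $\widetilde B$ onto $\R$ does not increase $\sup_{x\in\R}|w(x)\widetilde B(x)|$, and it is precisely here that the strict concavity of $\log w$ on $\R$ (condition (3)) is invoked to force real zeros to be optimal. This reduction of complex zeros to real ones is the principal obstacle of the whole proposition, and I expect it to demand the most care, since it is the only step where condition (3) is genuinely used.
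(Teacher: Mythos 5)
The paper itself contains no proof of this proposition: it is imported from \citeasnoun{sugihara2003} and \citeasnoun{tanaka2018design}, so your attempt can only be compared with the proofs in those sources. Structurally you have reproduced their argument: a contour-integral (Hermite/Ganelius-type) representation for the fixed-node upper bound, the fooling function $f^*=w\widetilde B$ (which annihilates all data $f^{*(k)}(a_j)$ and gives the matching lower bound, hence the middle equality), and a reduction of complex nodes to real ones. Two steps need repair, however. First, your intermediate claim $\sup_{\D_d}|u|\le 1$ cannot be obtained by a maximum-modulus argument: on $\partial\D_d$ the boundary values of $u$ are $(f-L_n)/(B_nw)$, and although $|f/w|\le 1$ and $|B_n|=1$ there, nothing makes $|L_n/w|$ small on the boundary, so maximum modulus only yields $1+\sup|L_n/w|$. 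The bound has to be read off directly from the representation
\[
	f(x)-L_n[a;f](x)
	=-\frac1{2\pi i}\oint_{\partial\D_d}
	\frac{f(\zeta)}{w(\zeta)}\,
	\frac{B_n(x)w(x)}{B_n(\zeta)}\,
	\frac{T_d'(x-\zeta)}{T_d(x-\zeta)}\dd\zeta,
\]
whose residues at $\zeta=a_k$ reproduce exactly the basis in (\ref{eq:0-2-1}) (note $B_n(x)/T_d(x-a_k)=B_{n;k}(x)$ and $\mathrm{Res}_{\zeta=a_k}B_n(\zeta)^{-1}=1/(T_d'(0)B_{n;k}(a_k))$, which is where the factor $T_d'(x-a_k)/T_d'(0)$ comes from), and whose residue at $\zeta=x$ is $-f(x)$. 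On $\mathop{\mathrm{Im}}\zeta=\pm d$ one has $|B_n(\zeta)|=1$, $|f/w|\le\|f\|$, and $|T_d'(x-\zeta)/T_d(x-\zeta)|=\frac{\pi}{2d}\,\mathrm{sech}\left(\frac{\pi}{2d}\mathop{\mathrm{Re}}(x-\zeta)\right)$, whose integral over each boundary line is $\pi$; the two lines contribute $2\pi$, so the constant is exactly $1$. Conditions (2) justify discarding the vertical ends of the rectangle. Your sketch gestures at this contour integral, but the phrase ``granting it'' conceals precisely the computation that makes the constant $1$ rather than some $C>1$; without it the first display of the proposition is not proved.

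Second, and more seriously, your diagnosis of the last step is wrong: the reduction from complex to real zeros does \emph{not} use condition (3). For $a=s+it\in\D_d$ and real $x$, since $T_d(x)\in\R$ one has $|1-\overline{T_d(a)}T_d(x)|=|1-T_d(a)T_d(x)|$, so by the $\tanh$ addition rule the modulus of the Blaschke factor equals $|T_d(x-a)|$; writing $c=\pi/(4d)$, a direct computation gives $|\tanh(c(x-s-it))|^2=\bigl(\sinh^2(c(x-s))+\sin^2(ct)\bigr)/\bigl(\cosh^2(c(x-s))-\sin^2(ct)\bigr)\ge\tanh^2(c(x-s))$, i.e.\ projecting each zero onto its real part decreases the factor's modulus at \emph{every} real $x$ simultaneously, for an arbitrary weight $w$. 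The weight multiplies both sides identically, so no symmetrization invoking the strict concavity of $\log w$ is needed or available; condition (3) is used elsewhere in the paper (existence/uniqueness of the equilibrium measure and of the discrete minimizer), not in this proposition. So the step you flagged as ``the principal obstacle'' demanding the most care is in fact an elementary pointwise inequality; your plan would have stalled looking for a $\log w$-based argument. One residual detail after projecting: distinct complex nodes may collide on $\R$, so either allow confluent limits or perturb to distinct real nodes using the continuity of $\sup_{x\in\R}|B_n(x;a,\D_d)w(x)|$ in $a$. With these two corrections your outline coincides with the cited proof.
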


By this assertion,
it is enough to consider interpolation formulas of the form (\ref{eq:0-2-1}).
Additionally,
this motivates us to analyze the value $\sup_{x\in\R}|B_n(x;a,\D_d)w(x)|$,
which is simpler than the worst-case error of (\ref{eq:0-2-1}).
In \citeasnoun{tanaka2017potential} and \citeasnoun{tanaka2018design},
\[
	-\log\left(\inf_{a_j\in\R}\sup_{x\in\R}|B_n(x;a,\D_d)w(x)|
	\right)
\]
is treated as an optimal value of an optimization problem
(justifiable by the addition rule of $\tanh$)
\[
	(\mathrm{DC})\quad
	\begin{array}{lr}\text{maximize}&\displaystyle \inf_{x\in\R}
	\left(\sum_{i=1}^nK(x-a_i)+Q(x)\right)\\
	\text{subject to}&\displaystyle a_1<\cdots<a_n,
	\end{array}
\]
where $K$ and $Q$ are defined by
\begin{align}
	K(x) &:=-\log |T_d(x)|\ \left(=-\log\left|\tanh\left(\frac\pi{4d}x\right)\right|\right), \\
	Q(x) &:=-\log w(x).
\end{align}
They considered a continuous relaxation of (DC)
as 
\[
	(\mathrm{CT})\quad
	\begin{array}{lr}\text{maximize}&\displaystyle \inf_{x\in\R}
	\left(\int_\R K(x-y)\dd\mu(y)+Q(x)\right)\\
	\text{subject to}&\displaystyle \mu\in\M_c(\R, n),
	\end{array}
\]
where, we define $\M(\R, n)$ as the set of all (positive) Borel measures $\mu$ over $\R$
with $\mu(\R)=n$ and
\[
	\M_c(\R, n):=\{\mu\in\M(\R, n)\mid \mathop{\mathrm{supp}}\mu\ \text{is compact}\}.
\]
Because each feasible solution of (DC)
can be interpreted as a combination of $\delta$-measures
being a feasible solution of (CT),
\begin{align}
	\text{(the optimal value of (DC))} \le
	\text{(the optimal value of (CT))}
	\label{eq:LH}
\end{align}

Potential theoretical arguments \cite{saff1997,levin2001,tanaka2018design}
lead to the following proposition.
\begin{prop}{\rm\cite[Theorem 2.4, 2.5]{tanaka2018design}}\label{prop2-2}
	The energy of $\mu\in\M(\R, n)$ is defined as
	\begin{align}
		I_n^\mathrm{C}(\mu)
		:=\int_\R\int_\R K(x-y)\dd\mu(x)\dd\mu(y)
		+2\int_\R Q(x)\dd\mu(x).
	\end{align}
	Then, there exists a unique minimizer $\mu_n^*$ over $\M(\R, n)$ of $I_n^\mathrm{C}(\mu)$
	with a compact support
	and $\mu_n^*$ is also an optimal solution of {\rm(CT)}.
	Furthermore,
	if we define
	\begin{align}
		F_{K,Q}^\mathrm{C}(n)
		& :=I_n^\mathrm{C}(\mu_n^*)-\int_\R Q(x)\dd\mu_n^*(x) 
		\notag \\
		& \ \left(=\int_\R\int_\R K(x-y)\dd\mu_n^*(x)\dd\mu_n^*(y)
		+\int_\R Q(x)\dd\mu_n^*(x)\right),
		\label{eq:def_cont_pot}
	\end{align}
	the optimal value of (CT)
	coincides with
	$\displaystyle\frac{F_{K,Q}^\mathrm{C}(n)}n$.
\end{prop}

Following this proposition,
\citeasnoun{tanaka2018design} considered a discrete counterpart of
$I^\mathrm{C}_n(\mu)$ and $F_{K,Q}^\mathrm{C}$,
which are defined for $a=\{a_i\}_{i=1}^n$ ($a_1<\cdots<a_n$) as
\begin{align}
	& I_{K,Q}^\mathrm{D}(a)
	:=\sum_{i\ne j}K(a_i-a_j)+\frac{2(n-1)}n\sum_{i=1}^nQ(a_i), \label{eq:disc_ene} \\
	& F_{K,Q}^\mathrm{D}(n)
	:=I_{K, Q}^\mathrm{D}(a^*)-\frac{n-1}n\sum_{i=1}^nQ(a_i^*), \label{eq:def_disc_pot}
\end{align}
where $a^*=\{a_i^*\}_{i=1}^n$ is the unique minimizer of $I_{K, Q}^\mathrm{D}(a)$,
which certainly exists according to Theorem 3.3 in \citeasnoun{tanaka2018design}.
We can easily obtain $a^*$ numerically as it is a solution of the convex programming
and it is known to satisfy \cite[Theorem 4.1]{tanaka2018design}
\begin{align}
	\sup_{\|f\|\le1,\ x\in\R}|f(x)-L_n[a^*;f](x)|
	\le \exp\left(-\frac{F^\mathrm{D}_{K, Q}(n)}{n-1}\right).
	\label{eq:RH}
\end{align}
Then $E_n^{\min}(\H^\infty(\D_d, w))$ is evaluated as \cite[Remark 4.2]{tanaka2018design}
\[
	\exp\left(-\frac{F^\mathrm{C}_{K, Q}(n)}n\right)
	\le E_n^{\min}(\H^\infty(\D_d, w))
	\le \exp\left(-\frac{F^\mathrm{D}_{K, Q}(n)}{n-1}\right).
\]
Indeed, the left inequality holds true by (\ref{eq:LH}) and Proposition \ref{prop2-2}
and the right inequality follows from (\ref{eq:RH}).
By this evaluation,
we can consider $L_n[a^*;f](x)$ as a nearly optimal approximation formula
if $F^\mathrm{C}_{K, Q}(n)/n$ and $F^\mathrm{D}_{K, Q}(n)/(n-1)$ are sufficiently close.

\subsection{Main results}
\label{sec:main_results}

In this paper,
we demonstrate the following two theorems.
The first and second theorems, respectively, correspond to (1) and (2) in Section \ref{sec:introduction}.
\begin{thm}\label{main1}
	For $n\ge2$,
	the following holds true
	\[
		\frac{F^\mathrm{D}_{K,Q}(n)}{n-1}\le
		\frac{F_{K, Q}^\mathrm{C}(n)}n
		\le \frac{n}{n-1}\left(\frac{2F^\mathrm{D}_{K, Q}(n)}{n-1}
		+(3+\log2)\right).
	\]
\end{thm}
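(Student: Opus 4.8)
The left inequality requires no new work. Combining the relaxation inequality (\ref{eq:LH}), Proposition \ref{prop2-2}, and the achievability bound (\ref{eq:RH}) gives the two-sided estimate
\[
\exp\!\left(-\frac{F^\mathrm{C}_{K,Q}(n)}{n}\right)\le E_n^{\min}(\H^\infty(\D_d,w))\le \exp\!\left(-\frac{F^\mathrm{D}_{K,Q}(n)}{n-1}\right),
\]
and applying $-\log(\cdot)$ to the two outer terms yields $F^\mathrm{D}_{K,Q}(n)/(n-1)\le F^\mathrm{C}_{K,Q}(n)/n$. The substance of the theorem is therefore the right inequality, which I would prove by bounding $F^\mathrm{C}_{K,Q}(n)$ from above through a single cleverly chosen feasible measure for the continuous energy, manufactured from the discrete minimizer $a^*=\{a_i^*\}_{i=1}^n$. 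Since $Q=-\log w\ge 0$ on $\R$ by condition~(1) and $\mu_n^*$ is a positive measure, Proposition \ref{prop2-2} gives
\[
F^\mathrm{C}_{K,Q}(n)=I_n^\mathrm{C}(\mu_n^*)-\int_\R Q\dd\mu_n^*\le I_n^\mathrm{C}(\mu_n^*)\le I_n^\mathrm{C}(\mu)
\]
for every $\mu\in\M(\R,n)$, the last step because $\mu_n^*$ minimizes $I_n^\mathrm{C}$. Hence it suffices to produce one convenient $\mu$ and estimate $I_n^\mathrm{C}(\mu)$.

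The naive choice $\mu=\sum_i\delta_{a_i^*}$ fails because $K(0)=+\infty$ makes the self-interaction diverge, so I would replace each point mass by a unit mass spread uniformly over a short interval $I_i=[a_i^*-\delta_i,a_i^*+\delta_i]$, with half-width $\delta_i$ taken to be a fixed fraction of the distance from $a_i^*$ to its nearest neighbour (small enough that the $I_i$ are disjoint and stay away from the origin of $K$). This keeps $\mu\in\M(\R,n)$ with finite energy. Splitting the double integral into its diagonal and off-diagonal parts, $I_n^\mathrm{C}(\mu)$ becomes a sum of self-energies, a sum of cross-energies, and the linear term $2\int_\R Q\dd\mu$.

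Each piece is then estimated separately. The cross-energies are averages of $K$ over $I_i\times I_j$; since $K$ is smooth and convex away from $0$, each is close to $K(a_i^*-a_j^*)$, so the off-diagonal sum is controlled by $\sum_{i\ne j}K(a_i^*-a_j^*)$ up to a lower-order correction. The linear term satisfies $2\int_\R Q\dd\mu\approx 2\sum_iQ(a_i^*)$ by local continuity and convexity of $Q$; comparing its coefficient $2$ against the coefficient $\tfrac{2(n-1)}{n}$ that the weight sum carries inside $F^\mathrm{D}_{K,Q}(n)$ is exactly what forces the outer factor $\tfrac{n}{n-1}$. The decisive term is the self-energy: a scaling computation for a uniform mass against the logarithmic kernel, combined with the expansion $K(x)=-\log|x|-\log\tfrac{\pi}{4d}+o(1)$ near $0$, shows that each self-energy equals $-\log\delta_i$ plus an explicit constant. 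Because $\delta_i$ is proportional to the nearest-neighbour gap $g_i$ and $K(g_i)=-\log g_i-\log\tfrac{\pi}{4d}+o(1)$, the $d$-dependent term $-\log\tfrac{\pi}{4d}$ cancels, leaving each self-energy equal to $K(g_i)$ plus a universal $O(1)$ constant. Summing and using $K\ge 0$ gives $\sum_i(\text{self-energy})\le\sum_{i\ne j}K(a_i^*-a_j^*)+O(n)$, a second copy of the interaction sum: this is the source of the factor $2$ in front of $F^\mathrm{D}_{K,Q}(n)$. Assembling the three bounds, dividing by $n$, and simplifying produces the stated inequality.

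The main obstacle is precisely this self-energy bookkeeping together with the extraction of the explicit, dimension-free constant $3+\log 2$. The half-widths $\delta_i$ must be small enough that the cross-energy and $Q$-term corrections stay negligible, yet comparable to the local spacing so that $\sum_i(-\log\delta_i)$ does not exceed the discrete interaction energy; the cancellation of the $d$-dependent pieces between the self-energy and $K(g_i)$ is what makes the final additive constant universal, and it only surfaces after one carefully tracks the uniform-kernel self-energy constant (namely $\tfrac32-\log 2$), the $\tanh$-versus-linear correction in $K$, and the convexity estimates for the cross-energies and the weight term.
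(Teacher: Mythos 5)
Your overall strategy for the right inequality is the paper's: bound $F^\mathrm{C}_{K,Q}(n)$ by $I_n^\mathrm{C}(\mu)$ for a single feasible $\mu\in\M(\R,n)$ built from the discrete minimizer (your reduction $F^\mathrm{C}_{K,Q}(n)\le I_n^\mathrm{C}(\mu_n^*)\le I_n^\mathrm{C}(\mu)$ and your derivation of the left inequality are both fine). But your specific $\mu$ differs: you smear each atom $\delta_{a_i^*}$ into a uniform unit mass on $[a_i^*-\delta_i,a_i^*+\delta_i]$, whereas the paper puts mass $\tfrac{n}{n-1}$ uniformly on each consecutive gap $[a_i^*,a_{i+1}^*)$, i.e., a histogram measure supported between the points. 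This difference is where your plan develops two genuine gaps. First, the cross-energy bookkeeping: you assert each cross term is $K(a_i^*-a_j^*)$ ``up to a lower-order correction,'' but there are $\Theta(n^2)$ ordered pairs and the theorem requires the \emph{total} additive slack to be $O(n)$ with the explicit constant $3+\log 2$. The per-pair bounds you have available do not deliver this: with $\delta_i$ a fixed fraction of the nearest gap, the worst-case per-pair estimate $K\bigl(\tfrac{|a_i^*-a_j^*|}{2}\bigr)\le K(a_i^*-a_j^*)+\log 2$ gives $O(n^2)$, and the sharper first-order estimate $\tfrac{s}{2}|K'(D-s)|$ with $s=\delta_i+\delta_j$, $D=|a_i^*-a_j^*|$ sums to order $n\log n$ already for roughly equispaced points (the harmonic-sum pile-up inside the logarithmic range of $K$ before its exponential decay takes over). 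To get $O(n)$ one must re-attribute each interval average to $K$ evaluated at \emph{actual neighboring pair distances} with bounded multiplicity --- exactly the endpoint/convexity coefficient counting (multiplicities $\tfrac12$, $\tfrac32$, total $\le 2$ per pair) that constitutes the core of the paper's proof and is absent from your sketch; this re-attribution, not the self-energy, is also where the paper's factor $2$ and the constant from its Lemmas \ref{lem:1-1}--\ref{lem:1-2} come from. An $O(n\log n)$ slack would still rescue the asymptotic near-optimality corollary, but not the theorem as stated (all $n\ge2$, constant $3+\log2$).

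Second, the $Q$-term: for your measure you need $\tfrac12\bigl(Q(a_i^*-\delta_i)+Q(a_i^*+\delta_i)\bigr)-Q(a_i^*)=O(1)$ per point, and convexity alone gives no such bound --- it forces $\delta_i\lesssim 1/|Q'|$ locally. For rapidly growing weights (e.g.\ the double-exponential case, $Q(x)=\beta e^{\gamma|x|}$ with $Q(a_n^*)$ growing like $n/\log n$) this constraint can collide with the requirement $\delta_i\gtrsim g_i$ that your self-energy argument needs, since the self-energy is $-\log\delta_i+O(1)$ and must be charged against $K(g_i)$. Reconciling the two would require spacing estimates for the discrete minimizer $a^*$ (e.g.\ that the outer gaps scale like $1/Q'(a_n^*)$), which neither you nor the paper establishes --- and note your outermost intervals protrude beyond $[a_1^*,a_n^*]$ into the region where $Q$ grows. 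The paper's gap-supported measure sidesteps both problems at once: every interval average of $Q$ is bounded by $\max\{Q(a_i^*),Q(a_{i+1}^*)\}$ with \emph{no} correction constant (plus the strict-convexity no-duplicate-max argument giving (\ref{eq:thm1-3-7})), and all $K$-averages are bounded by values of $K$ at genuine pair differences. Your self-energy computation itself (the uniform-interval constant $\tfrac32-\log2$, and the cancellation of $-\log\tfrac{\pi}{4d}$ via $\tanh x\le x$, which indeed works for all gap sizes, not just small ones) is sound; the missing ideas are the multiplicity-controlled re-attribution for the cross terms and a $Q$-compatible choice of $\delta_i$.
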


\begin{thm}\label{main2}
	Suppose $w$ is even on $\R$.
	For $\alpha_n>0$ that satisfies
	\[
		\frac{2\alpha_n}{\pi\tanh(d)}\frac{Q(\alpha_n)^2+Q'(\alpha_n)^2}{Q(\alpha_n)}\le n,
	\]
	we have
	\[
		\frac{F_{K, Q}^\mathrm{C}(n)}{n}\ge\frac{Q(\alpha_n)}2.
	\]
\end{thm}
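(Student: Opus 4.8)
The plan is to exhibit a single feasible point of the dual problem (D) whose objective value already forces the claimed bound. The reduction rests on two facts from the preceding sections: first, by Proposition \ref{prop2-2} one has $F^{\mathrm C}_{K,Q}(n)=\int_\R\int_\R K(x-y)\dd\mu_n^*(x)\dd\mu_n^*(y)+\int_\R Q\dd\mu_n^*$, while the optimal value of (P) equals $\int_\R\int_\R K\dd\mu_n^*\dd\mu_n^*+2\int_\R Q\dd\mu_n^*$, so their difference is $\int_\R\int_\R K\dd\mu_n^*\dd\mu_n^*\ge 0$ by positive semi-definiteness of $K$; hence $F^{\mathrm C}_{K,Q}(n)\ge\tfrac12\cdot(\text{optimal value of (P)})$. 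Second, strong duality (Theorem \ref{thm2-4}) identifies the optimal values of (P) and (D), so any dual-feasible pair $(\nu,s)$ gives $F^{\mathrm C}_{K,Q}(n)\ge\tfrac12\bigl(2ns-\int_\R\int_\R K\dd\nu\dd\nu\bigr)$. Writing $L:=Q(\alpha_n)$, I will construct $(\nu,s)$ with $s=L$ and $\int_\R\int_\R K\dd\nu\dd\nu\le nL$; then the right-hand side is at least $\tfrac12(2nL-nL)=\tfrac12 nL$, which is exactly $F^{\mathrm C}_{K,Q}(n)/n\ge Q(\alpha_n)/2$.

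The certificate is built on the Fourier side. A standard series expansion of $\log\coth$ gives the explicit transform $\widehat K(\omega)=\pi\tanh(d\omega)/\omega>0$, which re-proves positive semi-definiteness and yields, for $g:=K*\nu$, the Parseval identity $\int_\R\int_\R K\dd\nu\dd\nu=\frac1{2\pi}\int_\R |\widehat g(\omega)|^2/\widehat K(\omega)\dd\omega$. I take the target profile $g(x):=Q(\alpha_n)-Q(x)$ for $|x|\le\alpha_n$ and $g(x):=0$ otherwise, define $\nu$ through $\widehat\nu=\widehat g/\widehat K$, and set $s:=L$. The dual constraint $s-K*\nu\le Q$ reads $g(x)\ge Q(\alpha_n)-Q(x)$; it holds with equality on $[-\alpha_n,\alpha_n]$, and for $|x|>\alpha_n$ the right-hand side is negative while $g=0$, because $Q=-\log w$ is even and (by strict concavity of $\log w$) strictly convex, hence increasing on $[0,\infty)$.

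To bound the energy I will use the elementary inequality $\frac{|\omega|}{\tanh(d|\omega|)}\le\coth(d)\,(1+\omega^2)$, proved by splitting at $|\omega|=1$: for $|\omega|\le1$ the map $t\mapsto t\coth t$ is increasing so the left side is at most $\coth d\le\coth(d)(1+\omega^2)$, and for $|\omega|\ge1$ one has $\tanh(d|\omega|)\ge\tanh d$ and $|\omega|\le 1+\omega^2$. Feeding this into Parseval gives $\int_\R\int_\R K\dd\nu\dd\nu\le\frac1{\pi\tanh d}\bigl(\|g\|_{L^2}^2+\|g'\|_{L^2}^2\bigr)$. Since $0\le Q(\alpha_n)-Q(x)\le Q(\alpha_n)$ and, by convexity and evenness, $|Q'(x)|\le Q'(\alpha_n)$ for $|x|\le\alpha_n$, one gets $\|g\|_{L^2}^2\le 2\alpha_n Q(\alpha_n)^2$ and $\|g'\|_{L^2}^2\le 2\alpha_n Q'(\alpha_n)^2$. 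Hence $\int_\R\int_\R K\dd\nu\dd\nu\le\frac{2\alpha_n(Q(\alpha_n)^2+Q'(\alpha_n)^2)}{\pi\tanh d}\le nQ(\alpha_n)$ by hypothesis, completing the argument.

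The main obstacle is not the estimates but the admissibility of the Fourier-defined $\nu$, which is an $L^2$ density rather than a genuine signed measure, and the verification that the quadratic form and the convolution $K*\nu$ are computed correctly for it; this is precisely why the feasible class of (D) is enlarged beyond signed measures in Section \ref{sec4}, and I would phrase the certificate within that framework. A subsidiary point is to confirm $g\in H^1(\R)$ with compact support, so that $\widehat g=O(\omega^{-2})$ from the corner singularities at $\pm\alpha_n$ together with $1/\widehat K=O(|\omega|)$ forces $\widehat\nu\in L^2$ and finite energy; the pointwise constraint is then read off from the continuity of $g=K*\nu$.
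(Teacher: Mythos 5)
Your proposal is correct and is essentially the paper's own proof: the same dual certificate $\widehat\nu=\widehat g/\widehat K$ with $g=(Q(\alpha_n)-Q)\,1_{[-\alpha_n,\alpha_n]}$ and $s=Q(\alpha_n)$, the same feasibility check using monotonicity of $Q$ beyond $\alpha_n$, the same $L^2$/$\S_K$ admissibility considerations, and the same final chain $nQ(\alpha_n)\le I_n^{\mathrm C}(\mu^*)\le 2F^{\mathrm C}_{K,Q}(n)$ (via weak duality and $K\ge0$). The only cosmetic deviation is that you bound the energy with the single pointwise inequality $|\omega|/\tanh(d|\omega|)\le\coth(d)(1+\omega^2)$ and one Parseval application, whereas the paper splits at $|\omega|=1$ and integrates by parts on the tail — these are the same estimate and yield the identical bound $\frac{2\alpha_n}{\pi\tanh(d)}\bigl(Q(\alpha_n)^2+Q'(\alpha_n)^2\bigr)$ (your phrase ``their difference is $\iint K\dd\mu^*\dd\mu^*$'' should read: the difference between $2F^{\mathrm C}_{K,Q}(n)$ and the optimal value of (P) is $\iint K\dd\mu^*\dd\mu^*$, but your conclusion is the right one).
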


Theorem \ref{main1} shows the near optimality of the approximation formula
$L_n[a^*;f](x)$.
By the assertion of the theorem,
we have, for arbitrary $\ve>0$,
\[
	\sup_{\|f\|\le1,\ x\in\R}|f(x)-L_n[a^*;f](x)|
	\le \sqrt{2e^3}E_n^{\min}(\H^\infty(\D_d, w))^{\frac1{2+\ve}}
\]
for each sufficiently large $n$.
In addition, Theorem \ref{main2} (combined with Theorem \ref{main1}) gives an explicit upper bound of
$E_n^{\min}(\H^\infty(\D_d, w))$ as
\[
	E_n^{\min}(\H^\infty(\D_d, w))\le
	\sqrt{2e^3}\exp\left(-\frac{n-1}{4n}Q(\alpha_n)\right).
\]

\subsection{Basic ideas to show the main results}

The left inequality of Theorem \ref{main1} is from Theorems 3.4 and 3.5 in \citeasnoun{tanaka2018design}.
To prove the right inequality of Theorem \ref{main1}, 
we consider the optimization problem
\begin{align}
	(\mathrm{P})\quad
	\begin{array}{lr}\text{minimize}&\displaystyle
	\int_\R\int_\R K(x-y)\dd\mu(x)\dd\mu(y) + 2\int_\R Q(x)\dd\mu(x)
	\nonumber\\
	\text{subject to}&
	\begin{array}{r}
	\mu \in \mathcal{M}(\mathbb{R}, n),
	\end{array}
	\end{array}
\end{align}
whose solution provides $F^\mathrm{C}_{K, Q}(n)$ as shown in Proposition \ref{prop2-2}. 
The quantity 
$F^\mathrm{D}_{K, Q}(n)$
is obtained from the optimal solution of a discrete counterpart of (P) given by~\eqref{eq:disc_ene}. 
Then, 
we construct a feasible solution of (P) given later by \eqref{eq:feasible_solution_of_P}
from the optimal solution of the discrete counterpart.
By using the feasible solution, 
we bound $F^\mathrm{C}_{K, Q}(n)$ from above by using $F^\mathrm{D}_{K, Q}(n)$. 

To prove Theorem \ref{main2}, 
we need a lower bound of the optimal value of (P).
However, because (P) is a minimization problem,
any concrete feasible solution does not help us.
Therefore,
we prove that
(P) can be regarded as an infinite-dimensional convex quadratic programming,
as $K$ is {\it positive semi-definite in measure} (Definition \ref{def2-1}),
and take the dual problem \cite{dorn1960duality,luenberger1997}.
We also show that the dual problem
\begin{align}
	(\mathrm{D})\quad
	\begin{array}{lr}\text{maximize}&\displaystyle -\int_\R\int_\R K(x-y)\dd\nu(x)\dd\nu(y)
	+ 2ns \\
	\text{subject to}&
	\begin{array}{r}
		\text{$\nu$ is a signed Borel measure}\\
		\displaystyle s-\int_\R K(\cdot-y)\dd\nu(y)\le Q
	\end{array}
	\end{array}
	\label{eq:pre_problem_D}
\end{align}
satisfies the weak and strong duality (Theorem \ref{thm2-4}),
i.e., the optimal value of (D) coincides with that of (P).
By this,
we can obtain a lower bound for the optimal value of (P),
taking concrete $\nu$ and $s$.
The practical advantage of taking (D)
is that $\nu$ can be a signed measure (though we indeed deal with a little wider class in Section \ref{sec:duality_theorem}),
which means that we can define $\nu$ as some Fourier transform of the symmetric function,
without confirming the non-negativity.
This solves one of the improper points of the evaluation in \citeasnoun{tanaka2017potential}.

\begin{remark}
Problem (D) in \eqref{eq:pre_problem_D} needs to be more rigorous to realize a primal-dual theory for (P) and (D). 
In Section \ref{sec:duality_theorem}, 
we provide a rigorous form of (D) by introducing a set $\mathcal{S}_{K}$ for $\nu$.
\end{remark}


\section{Proof of Theorem \ref{main1}}\label{sec3}

To prove Theorem \ref{main1}, we prepare the following lemmas.

\begin{lem}\label{lem:1-1}
	For arbitrary $t>0$,
	the following holds true.
	\[
		\int_0^1K(tx)\dd x \le K(t)+1.
	\]
\end{lem}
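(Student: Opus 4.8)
The plan is to reduce the inequality to a monotonicity statement about a single auxiliary function. First I would substitute $u=tx$ to rewrite the left-hand side as $\int_0^1 K(tx)\dd x=\frac1t\int_0^t K(u)\dd u$, so that the claim becomes $\int_0^t K(u)\dd u\le t\,K(t)+t$. Equivalently, setting
\[
	\psi(t):=\int_0^t K(u)\dd u-t\,K(t)-t,
\]
it suffices to prove $\psi(t)\le0$ for every $t>0$.

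Next I would study the derivative of $\psi$, where the useful cancellation occurs. By the fundamental theorem of calculus the derivative of the integral term is $K(t)$, while differentiating $t\,K(t)$ gives $K(t)+t\,K'(t)$; the two copies of $K(t)$ cancel and I am left with $\psi'(t)=-t\,K'(t)-1$. To make this explicit I would compute $K'$ directly from $K(x)=-\log\tanh(\frac{\pi}{4d}x)$, using the identity $\frac{\dd}{\dd y}\log\tanh y=\frac{2}{\sinh(2y)}$, which yields $K'(t)=-\frac{\pi/(2d)}{\sinh(\frac{\pi}{2d}t)}$ and hence $-t\,K'(t)=\frac{(\pi/(2d))t}{\sinh(\frac{\pi}{2d}t)}$. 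Writing $s=\frac{\pi}{2d}t>0$, this becomes $\psi'(t)=\frac{s}{\sinh s}-1$.

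The decisive elementary fact is then $\sinh s\ge s$ for $s\ge0$, which forces $\frac{s}{\sinh s}\le1$ and therefore $\psi'(t)\le0$ on all of $(0,\infty)$. Combined with the boundary value $\psi(0)=0$, monotonicity gives $\psi(t)\le\psi(0)=0$, which is exactly the desired bound.

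I expect the only delicate point to be the boundary analysis at $t=0$. Since $K(u)\sim-\log u$ near the origin, I must confirm that this logarithmic singularity is integrable, so that $\int_0^t K(u)\dd u\to0$, and that $t\,K(t)\to0$ as $t\searrow0$ (both following from $u\log u\to0$); only then is the continuous extension $\psi(0)=0$ legitimate rather than an indeterminate form. Everything past that boundary check is the clean one-line monotonicity argument driven by $\sinh s\ge s$.
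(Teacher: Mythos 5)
Your proof is correct, but it takes a genuinely different route from the paper's. The paper decomposes $K(x)=g(x)-\log\left(\frac{\pi}{4d}x\right)$, where $g(x):=K(x)+\log\left(\frac{\pi}{4d}x\right)$ is shown (via the auxiliary function $h(x)=\frac{x}{2\tanh(x/2)}$ and a derivative computation) to be positive, strictly increasing, and vanishing as $x\searrow0$; monotonicity then gives $\int_0^1 g(tx)\dd x\le g(t)$, while the logarithmic part is integrated exactly, $-\int_0^1\log\left(\frac{\pi}{4d}tx\right)\dd x=-\log\left(\frac{\pi}{4d}t\right)+1$, which is precisely where the additive constant $1$ comes from. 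You instead absorb everything into $\psi(t)=\int_0^t K(u)\dd u-tK(t)-t$ and reduce the claim to $\psi'\le0$ on $(0,\infty)$, i.e., to the single inequality $-tK'(t)=\frac{s}{\sinh s}\le1$ with $s=\frac{\pi}{2d}t$, together with the boundary check $\psi(0^+)=0$, which you rightly flag as the delicate point and correctly settle (the logarithmic singularity of $K$ at $0$ is integrable and $tK(t)\to0$). Your derivative computation checks out: $\frac{\dd}{\dd y}\log\tanh y=\frac{2}{\sinh 2y}$ gives $K'(t)=-\frac{\pi/(2d)}{\sinh\left(\frac{\pi}{2d}t\right)}$, and $\sinh s\ge s$ closes the argument; since $K$ is smooth on $(0,\infty)$, the fundamental theorem of calculus applies without issue. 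Comparing the two: your argument is shorter and exposes the real mechanism, namely that the lemma holds for any kernel satisfying $-tK'(t)\le1$ with an integrable singularity at the origin, so it is marginally more general and more mechanical; the paper's decomposition, on the other hand, produces a by-product with further use, since the monotonicity of $h$ (equivalently of $\omega/\tanh(d\omega)$ on $[0,\infty)$) established inside its proof is explicitly reused in Section 5 when bounding $G_1(\alpha)$, so the paper's seemingly more roundabout proof does double duty.
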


\begin{proof}
	Consider the function
	$g(x):=K(x)+\log\left(\frac{\pi}{4d}x\right)$ defined for $x>0$.
	We first prove that $g(x)$ is strictly increasing and satisfies $\lim_{x\searrow0}g(x)=0$.
	Let $h(x):=\exp\left(g\left(\frac{2d}\pi x\right)\right)$.
	Then, we have
	\[
		h(x)=\frac{x}{2\tanh\frac{x}2}
		=\frac{x(e^{x}+1)}{2(e^{x}-1)}
	\]
	and
	\[
		h'(x)
		=\frac{(xe^x+e^{x}+1)(e^{x}-1)-x(e^{x}+1)e^{x}}{2(e^{x}-1)^2}
		=\frac{e^{2x}-2xe^x-1}{2(e^x-1)^2}.
	\]
	Because $(e^{2x}-2xe^x-1)'=2(e^{2x}-e^x-xe^x)=2e^x(e^x-1-x)$ is valid,
	we have $h'(x)>0$ for $x>0$.
	Evidently, we also have $\lim_{x\searrow0}h(x)=1$.
	Thus, $g$ satisfies the above properties.
	
	Because $g$ is positive and increasing, $\int_0^1g(tx)\dd x\le g(t)$ is valid.
	Therefore, we have
	\begin{align*}
		\int_0^1 K(tx) \dd x
		&=\int_0^1 g(tx) \dd x
		-\int_0^1 \log\left(\frac{\pi}{4d}tx\right) \dd x\\
		&\le g(t) - \log\left(\frac{\pi}{4d}t\right) + 1\\
		&=K(t)+1
	\end{align*}
	as desired.
\end{proof}

\begin{lem}\label{lem:1-2}
	For arbitrary $x>0$,
	the following hold true.
	\[
		K\left(\frac{x}2\right)\le K(x)+\log 2.
	\]
\end{lem}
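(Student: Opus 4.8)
The plan is to unwind the definition of $K$ and reduce the claim to a clean elementary inequality for $\tanh$. First I would note that for $x>0$ we have $T_d(x)=\tanh\left(\frac{\pi}{4d}x\right)\in(0,1)$, so that $K(x)=-\log\tanh\left(\frac{\pi}{4d}x\right)$ with no absolute value needed. Writing $t:=\frac{\pi}{4d}x>0$, the target inequality $K(x/2)\le K(x)+\log2$ becomes
\[
	-\log\tanh\left(\frac t2\right)\le-\log\tanh(t)+\log2,
\]
which, after exponentiating, is equivalent to the single ratio estimate
\[
	\frac{\tanh(t)}{\tanh(t/2)}\le2.
\]

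The key step is then to invoke the double-angle identity $\tanh(t)=\dfrac{2\tanh(t/2)}{1+\tanh^2(t/2)}$, which gives
\[
	\frac{\tanh(t)}{\tanh(t/2)}=\frac{2}{1+\tanh^2(t/2)}.
\]
Since $\tanh^2(t/2)\ge0$, the denominator is at least $1$, so the ratio is bounded above by $2$, which is exactly the required estimate. This holds for every $t>0$, hence for every $x>0$, establishing the lemma.

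I do not anticipate a genuine obstacle; the only point requiring care is the sign and branch of the logarithm, which is precisely why restricting to $x>0$ at the outset is important, as it makes $\tanh$ positive and lets the absolute value in the definition of $K$ be dropped cleanly. It is also worth observing that equality is approached as $t\searrow0$, so the additive constant $\log2$ is sharp and cannot be improved.
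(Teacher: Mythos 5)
Your proof is correct and follows essentially the same route as the paper: both reduce the claim to the elementary inequality $\tanh t \le 2\tanh(t/2)$ for $t>0$. The only difference is cosmetic—you verify it instantly via the double-angle identity $\tanh t = \frac{2\tanh(t/2)}{1+\tanh^2(t/2)}$, while the paper expands in exponentials and applies AM--GM—and your observation that the constant $\log 2$ is sharp as $t\searrow 0$ is a nice extra.
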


\begin{proof}
	By the definition of $K$,
	it suffices to show that
	$\tanh x \le 2\tanh \frac{x}2$.
	Indeed, we have
	\begin{align*}
		\frac{2\tanh \frac{x}2}{\tanh x}
		=\frac{2(e^x-1)}{e^x+1}\cdot\frac{e^{2x}+1}{e^{2x}-1}
		=\frac{2(e^{2x}+1)}{(e^x+1)^2}
		\ge \frac{(e^{2x}+1)+2e^x}{(e^x+1)^2}=1,
	\end{align*}
	where we have used $e^{2x}+1\ge 2e^x$ (AM-GM inequality).
\end{proof}

We can now prove the first theorem.

\begin{proof}[Proof of Theorem \ref{main1}]
	The left inequality is from Theorem 3.4 and 3.5 in \citeasnoun{tanaka2018design}.
	
	Let us prove the right inequality.
	Let $a=(a_1, \ldots, a_n)$ (with $a_1<\cdots<a_n$) be the minimizer of the discrete energy,
	satisfying
	\[
		F_{K, Q}^\mathrm{D}(n)=\sum_{i\ne j}K(a_i-a_j)+\frac{n-1}n\sum_{i=1}^nQ(a_i).
	\]
	Let $\mu$ be a measure with a density function $p$ defined by
	\begin{align}
		p(x)=\begin{cases}
			\frac{n}{(n-1)(a_{i+1}-a_i)} & (x\in[a_i, a_{i+1}),\ i=1,\ldots,n-1),\\
			0 & (\text{otherwise}).
		\end{cases}
		\label{eq:feasible_solution_of_P}
	\end{align}
	Then, we have
	\begin{align}
		F_{K, Q}^\mathrm{C}(n)
		\le I_n^\mathrm{C}(\mu_n^*)
		\le I_n^\mathrm{C}(\mu).
		\label{eq:thm1-3-1}
	\end{align}
	In the following,
	we obtain an upper bound of $I_n^\mathrm{C}(\mu)$.
	First,
	we evaluate $\int_\R\int_\R K(x-y)\dd\mu(x)\dd\mu(y)$.
	For $1\le k\le n-1$ and $y\in[a_k, a_{k+1})$,
	we have
	\begin{align*}
		\int_\R K(x-y)\dd\mu(x)
		&=\int_\R K(x-y)p(x)\dd x\\
		&=\sum_{i=1}^{n-1} \frac{n}{(n-1)(a_{i+1}-a_i)}\int_{a_i}^{a_{i+1}}K(x-y)\dd x\\
		&=\frac{n}{n-1}\sum_{i=1}^{n-1}\int_0^1K(a_i+(a_{i+1}-a_i)z-y)\dd z.
	\end{align*}
	Here, because $y\in[a_k, a_{k+1})$, for $i\not\in\{k-1, k, k+1\}$,
	the convexity and monotonicity of $K$ over $(-\infty, 0)$ or $(0, \infty)$ shows that
	\[
		\int_0^1K(a_i+(a_{i+1}-a_i)z-y)\dd z\le
		\begin{cases}
			\frac12\left(K(a_i-a_k)+K(a_{i+1}-a_k)\right) &(i\le k-2),\\
			\frac12\left(K(a_i-a_{k+1})+K(a_{i+1}-a_{k+1})\right) &(i\ge k+2).
		\end{cases}
	\]
	Therefore, by considering that $K$ is non-negative, we have
	\begin{align}
		\sum_{i\ne k-1,k,k+1}\int_0^1K(a_i+(a_{i+1}-a_i)z-y)\dd z
		&\le \sum_{j\le k-2}K(a_j-a_k)+\sum_{j\ge k+3}K(a_j-a_{k+1})\nonumber\\
		&\quad +\frac12\left( K(a_{k-1}-a_k)+ K(a_{k+2}-a_{k+1}) \right)
		\label{eq:thm1-3-2}
	\end{align}
	Here, the terms that include an index of $a$ outside the domain $\{1,\ldots,n\}$
	are void.
	Next, we consider the cases $i=k\pm1$.
	If $k-1\ge1$ is valid, we have
	\begin{align}
		\int_0^1K(a_{k-1}+(a_k-a_{k-1})z-y)\dd z
		&\le \int_0^1K(a_{k-1}+(a_k-a_{k-1})z-a_k)\dd z\nonumber\\
		&=\int_0^1K((a_k-a_{k-1})w)\dd w\nonumber\\
		&\le K(a_k-a_{k-1}) + 1 =K(a_{k-1}-a_k)+1.
		\label{eq:thm1-3-3}
	\end{align}
	Similarly, if $k+2\le n$ is valid, we have, by Lemma \ref{lem:1-1},
	\begin{align}
		\int_0^1K(a_{k+1}+(a_{k+2}-a_{k+1})z-y)\dd z
		\le K(a_{k+2}-a_{k+1})+1.
		\label{eq:thm1-3-4}
	\end{align}
	Finally, we deal with the case $i=k$.
	We show that the integral
	\[
		L_k(y):=\int_0^1 K(a_k+(a_{k+1}-a_k)z-y)\dd z
	\]
	is maximized at $y=\frac{a_k+a_{k+1}}2$ (over $y\in[a_k, a_{k+1})$).
	If we define $t:=\frac{y-a_k}{a_{k+1}-a_k}$ ($t\in[0, 1)$),
	the following holds true.
	\[
		L_k(y)
		=\int_0^t K((a_{k+1}-a_k)w)\dd w + \int_0^{1-t} K((a_{k+1}-a_k)w)\dd w.
	\]
	For $t<\frac12$,
	we have
	\begin{align*}
		&L_k\left(\frac{a_k+a_{k+1}}2\right)-L_k(y)\\
		&=\int_t^{\frac12}K((a_{k+1}-a_k)w)\dd w-\int_{\frac12}^{1-t}K((a_{k+1}-a_k)w)\dd w\\
		&=\int_0^{\frac12-t}\left(
			K((a_{k+1}-a_k)(t+w))-K\left((a_{k+1}-a_k)\left(\frac12+w\right)\right)
		\right)\dd w > 0.
	\end{align*}
	By symmetry, $L_k(y)< L_k\left(\frac{a_k+a_{k+1}}2\right)$ is valid for $t>\frac12$.
	Therefore, by Lemma \ref{lem:1-1} and \ref{lem:1-2},
	\begin{align}
		\int_0^1 K(a_k+(a_{k+1}-a_k)z-y)\dd z
		&\le L_k\left(\frac12\right)\nonumber\\
		&=2\int_0^{\frac12}K((a_{k+1}-a_k)w)\dd w\nonumber\\
		&=\int_0^1K\left(\frac{a_{k+1}-a_k}2v\right) \dd v\nonumber\\
		&\le K\left(\frac{a_{k+1}-a_k}2\right)+1\nonumber\\
		&\le K(a_{k+1}-a_k)+1+\log 2
		\label{eq:thm1-3-5}
	\end{align}
	By (\ref{eq:thm1-3-2})--(\ref{eq:thm1-3-5}),
	we have the bound
	\begin{align}
		\left(\frac{n-1}n\right)^2&\int_{a_k}^{a_{k+1}}\int_\R K(x-y)\dd\mu(x)\dd\mu(y)
		\nonumber\\
		&\le\frac{n-1}n\sup_{y\in[a_k, a_{k+1})}\int_\R K(x-y)\dd\mu(x)\nonumber\\
		&\le \sum_{j\le k-2}K(a_j-a_k)
			+\sum_{j\ge k+3}K(a_j-a_{k+1}) + 3+\log2\nonumber\\
		&\quad+\frac32K(a_{k-1}-a_k)+\frac12K(a_k-a_{k+1})
		+\frac12K(a_{k+1}-a_k)+\frac32K(a_{k+2}-a_{k+1}).\nonumber
	\end{align}
	Considering the sum of the right-hand side with respect to $k=1,\ldots,n-1$,
	the coefficient of each $K(a_i-a_j)$ with $|i-j|\ge2$ is at most $1$,
	and that of $K(a_i-a_j)$ with $|i-j|=1$ is at most $2$ ($=\frac12+\frac32$),
	where we have distinguished $K(a_i-a_j)$ from $K(a_j-a_i)$.
	Therefore, we have
	\begin{align}
		\left(\frac{n-1}n\right)^2\int_\R\int_\R K(x-y)\dd\mu(x)\dd\mu(y)
		\le 2\sum_{i\ne j}K(a_i-a_j)+(n-1)(3+\log 2).
		\label{eq:thm1-3-6}
	\end{align}
	
	Let us now evaluate the second term of $I_n^\mathrm{C}(\mu)$,
	i.e., $\int_\R Q(x)\dd\mu(x)$.
	By the convexity of $Q$,
	we have
	\begin{align*}
		\int_\R Q(x)\dd\mu(x)
		&=\frac{n}{n-1}\sum_{i=1}^{n-1}\int_0^1Q(a_i+(a_{i+1}-a_i)z)\dd z\\
		&\le\frac{n}{n-1}\sum_{i=1}^{n-1}\max\{Q(a_i), Q(a_{i+1})\}.
	\end{align*}
	It should be noted here that there are no duplicates for $\max\{Q(a_i), Q(a_{i+1})\}$,
	i.e., it is impossible for $Q(a_{i+1})$ to be $\max\{Q(a_i), Q(a_{i+1}), Q(a_{i+2})\}$,
	by the strong convexity.
	Therefore,
	the following holds true.
	\begin{align}
		\int_\R Q(x)\dd\mu(x)
		\le \frac{n}{n-1}\sum_{i=1}^{n}Q(a_i).
		\label{eq:thm1-3-7}
	\end{align}
	Combining (\ref{eq:thm1-3-6}) and (\ref{eq:thm1-3-7}),
	we obtain
	\begin{align*}
		I_n^\mathrm{C}(\mu)
		&=\int_\R\int_\R K(x-y)\dd\mu(x)\dd\mu(y)
		+2\int_\R Q(x)\dd\mu(x)\\
		&\le 2\left(\frac{n}{n-1}\right)^2\sum_{i\ne j}K(a_i-a_j)
		+\frac{2n}{n-1}\sum_{i=1}^nQ(a_i)+
		\frac{n^2}{n-1}(3+\log2)\\
		&=2\left(\frac{n}{n-1}\right)^2\left(\sum_{i\ne j}K(a_i-a_j)
		+\frac{n-1}{n}\sum_{i=1}^nQ(a_i)\right)+
		\frac{n^2}{n-1}(3+\log2)\\
		&=2\left(\frac{n}{n-1}\right)^2 F_{K,Q}^\mathrm{D}(n)
		+\frac{n^2}{n-1}(3+\log2).
	\end{align*}
	Now, using (\ref{eq:thm1-3-1}), we reach the conclusion.
\end{proof}


\section{Duality theorem for convex programming of measures}\label{sec:duality_theorem}

The following definition is a variant of the
existing definitions of positive definite kernel
\cite{stewart1976positive,jaming2009extremal,sriperumbudur2010hilbert}.

\begin{dfn}\label{def2-1}
	Let $X$ be a topological space.
	A non-negative measurable function $k:X\times X\to\R_{\ge0}\cup\{\infty\}$
	is called {\it positive semi-definite in measure} if it satisfies
	\begin{align}
		\int_X\int_Xk(x,y)\dd\mu(x)\dd\mu(y)\ 
		+ & \ \int_X\int_Xk(x,y)\dd\nu(x)\dd\nu(y)\nonumber\\
		&\ge
		\int_X\int_Xk(x,y)\dd\mu(x)\dd\nu(y)
		+\int_X\int_Xk(x,y)\dd\nu(x)\dd\mu(y)
		\label{eq:def2-1}
	\end{align}
	for arbitrary (positive) $\sigma$-finite Borel measures $\mu, \nu$ on $X$.
\end{dfn}

\begin{rem}\label{rem2-2}
	Let $k$ be positive semi-definite in measure.
	Considering the Hahn-Jordan decomposition of a signed measure,
	we have
	\[
		\int_X\int_Xk(x,y)\dd|\mu|(x)\dd|\mu|(y)<\infty
		\Longrightarrow
		\int_X\int_Xk(x,y)\dd\mu(x)\dd\mu(y)\ge0
	\]
	for an arbitrary signed Borel measure $\mu$ on $X$ with $|\mu|$ being $\sigma$-finite,
	where $|\mu|$ denotes the total variation of $\mu$.
	This is the generalization of the ordinary positive semi-definiteness.
	Notice that this non-negativity holds for a wider class of ``measure".
	Indeed, if we define
	\[
		\S_k:=\left\{(\mu_+, \mu_-) \,\middle|\,
		\begin{array}{c}
			\text{$\mu_+$ and $\mu_-$ are $\sigma$-finte Borel measures}\\
			\int_X\int_Xk(x,y)\dd\mu_+(x)\dd\mu_+(y),
			\int_X\int_Xk(x,y)\dd\mu_-(x)\dd\mu_-(y)<\infty
		\end{array}\right\}
	\]
	and for each $\nu=(\nu_+,\nu_-)\in\S_k$ define
	\begin{align*}
		\int_X\int_Xk(x,y)\dd\nu(x)\dd\nu(y)
		&:=
		\int_X\int_Xk(x,y)\dd\nu_+(x)\dd\nu_+(y)\ 
		+  \ \int_X\int_Xk(x,y)\dd\nu_-(x)\dd\nu_-(y)\nonumber\\
		&\quad -
		\int_X\int_Xk(x,y)\dd\nu_+(x)\dd\nu_-(y)
		-\int_X\int_Xk(x,y)\dd\nu_-(x)\dd\nu_+(y),
	\end{align*}
	then this integral is well-defined and the generalization of quadratic forms for ordinary signed measures.
	We formally write $\nu=\nu_+-\nu_-$ in such a situation,
	and call it also the Hahn-Jordan decomposition of $\nu$.
\end{rem}

\begin{lem}\label{lem2-3}
	Let $K:\R\to\R_{\ge0}\cup\{+\infty\}$ be an even function.
	If $K\in L^1(\R)$ and $K$ is convex on $[0, \infty)$,
	and satisfies $\lim_{x\searrow0}K(x)=K(0)$,
	then $K(x-y)$ is positive semi-definite in measure.
\end{lem}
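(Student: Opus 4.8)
The plan is to realize $K$ as a non-negative superposition of symmetric ``tent'' kernels, each of which is manifestly positive semi-definite in measure, and then to interchange the order of integration. First I would record the structural consequences of the hypotheses: since $K$ is convex on $[0,\infty)$, non-negative, and integrable, its right derivative is non-decreasing and cannot be positive anywhere (otherwise $K$ would grow at least linearly and fail to lie in $L^1$), so $K$ is non-increasing on $[0,\infty)$ with $K(x)\to0$ and $K'(x)\to0$ as $x\to\infty$. Writing $\lambda$ for the non-negative, $\sigma$-finite second-derivative measure of $K$ on $(0,\infty)$, an integration-by-parts computation yields the representation
\[
	K(x)=\int_{(0,\infty)}(s-|x|)_+\dd\lambda(s),\qquad x\in\R,
\]
valid for $x\ne0$ directly, and at $x=0$ by monotone convergence together with the assumed continuity $\lim_{x\searrow0}K(x)=K(0)$ (both sides possibly $+\infty$); here I use that $K$ is even, so $K(x)=K(|x|)$.

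Next I would show that each tent kernel $(s-|x-y|)_+$ is positive semi-definite in measure, with a transparent square representation. Using the elementary autocorrelation identity $(s-|x-y|)_+=\int_\R\mathbf{1}_{[-s/2,s/2]}(x-t)\,\mathbf{1}_{[-s/2,s/2]}(y-t)\dd t$ and Tonelli's theorem, one obtains, for positive $\sigma$-finite measures $\mu,\nu$ and with $G_\mu(s,t):=\mu([t-s/2,t+s/2])$,
\[
	\int_\R\int_\R(s-|x-y|)_+\dd\mu(x)\dd\nu(y)=\int_\R G_\mu(s,t)\,G_\nu(s,t)\dd t,
\]
so in particular the quadratic version with $\mu=\nu$ is $\int_\R G_\mu(s,t)^2\dd t\ge0$.

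Finally I would combine the two ingredients. Substituting the tent representation of $K$ into each of the four double integrals appearing in (\ref{eq:def2-1}) and interchanging the order of integration by Tonelli (all integrands non-negative, and all the product measures $\sigma$-finite) gives
\[
	\int_\R\int_\R K(x-y)\dd\mu\dd\mu+\int_\R\int_\R K(x-y)\dd\nu\dd\nu-\int_\R\int_\R K(x-y)\dd\mu\dd\nu-\int_\R\int_\R K(x-y)\dd\nu\dd\mu=\int_{(0,\infty)}\int_\R\big(G_\mu(s,t)-G_\nu(s,t)\big)^2\dd t\dd\lambda(s)\ge0.
\]
When the left-hand side of (\ref{eq:def2-1}) is infinite the claimed inequality is trivial; when it is finite, $G_\mu$ and $G_\nu$ lie in $L^2(\dd t\dd\lambda)$, so the cross terms are finite by Cauchy--Schwarz and the rearrangement above is legitimate.

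I expect the main obstacle to be the careful justification of the tent representation of $K$ near the origin. Because $K$ may blow up at $0$ (as in the motivating kernel $K(x)=-\log|\tanh(\tfrac{\pi}{4d}x)|$), the measure $\lambda$ need not be finite near $s=0$, so I must verify that $\lambda$ is nonetheless $\sigma$-finite, that $\int_{(0,\infty)}(s-|x|)_+\dd\lambda(s)$ converges for each $x\ne0$, and that its limit as $x\searrow0$ agrees with $K(0)$ (this is precisely where the continuity hypothesis is used). Once the representation and the $\sigma$-finiteness of $\lambda$ are secured, the remaining interchanges are routine applications of Tonelli's theorem.
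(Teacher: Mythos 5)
Your proof is correct, but it takes a genuinely different route from the paper. The paper treats the case $K(0)<\infty$ by citing P\'olya's theorem: $K$ is then a P\'olya-type function, hence the characteristic function of a positive bounded Borel measure $\alpha$, and positive semi-definiteness in measure follows by truncating $\mu_\pm$ to sets of finite measure, applying Fubini to get $\int_\R\bigl|\int_\R e^{-i\omega x}\dd(\mu_+^k-\mu_-^k)\bigr|^2\dd\alpha(\omega)\ge0$, and passing to the limit by monotone convergence; the case $K(0)=\infty$ then requires a separate smoothing step, replacing $K$ by $K_\ve(x)=\frac1\ve\int_0^\ve K(|x|+z)\dd z$ (bounded, still convex and integrable) and letting $\ve\searrow0$ via monotone convergence. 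You instead re-derive, in real-variable terms, exactly the structural fact that underlies P\'olya's criterion: the tent decomposition $K(x)=\int_{(0,\infty)}(s-|x|)_+\dd\lambda(s)$ with $\lambda$ the second-derivative measure, together with the autocorrelation identity for each tent, which converts the whole quadratic form into the manifestly non-negative expression $\int_{(0,\infty)}\int_\R(G_\mu(s,t)-G_\nu(s,t))^2\dd t\dd\lambda(s)$. Your bookkeeping is sound: the integration by parts uses $K'_+\to0$ and $K\to0$ at infinity (both forced by convexity, non-negativity and integrability), convergence of the tent integral for $x\ne0$ is automatic from the derivation since $K$ is finite on $(0,\infty)$, the hypothesis $\lim_{x\searrow0}K(x)=K(0)$ is used precisely where you say (to extend the representation to the diagonal, both sides possibly infinite), and the Cauchy--Schwarz remark legitimizes the subtraction when the left-hand side of (\ref{eq:def2-1}) is finite. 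What each approach buys: the paper's argument is shorter given the cited literature, outsourcing the hard analytic content to the P\'olya-type characterization; yours is self-contained, avoids Fourier analysis and the truncation and $\ve$-smoothing steps entirely (the case $K(0)=\infty$ needs no special treatment, since Tonelli handles non-negative integrands uniformly), and upgrades the inequality to an exact sum-of-squares identity, which is structurally more informative.
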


\begin{proof}
	Because $K$ is integrable and convex, $K$ is continuous over $(0, \infty)$
	and $\lim_{x\to\infty}K(x)=0$ holds true.
	If $K(0)<\infty$, $K$ becomes continuous and this type of function is called
	{\it P\'{o}lya-type}. P\'{o}lya-type functions are known to be a characteristic function of
	a positive bounded Borel measure,
	i.e., there exists a positive bounded measure $\alpha$ on $\R$ such that
	\begin{align}
		K(x)=\int_\R e^{-i\omega x} \dd\alpha(\omega)
		\label{eq:lem2-3-1}
	\end{align}
	is valid \cite{jaming2009extremal,polya1949remarks}.
	Let $\mu$ be a signed Borel measure with
	$\int_\R\int_\R K(x-y)\dd\mu(x)\dd\mu(y)$
	being finite and $|\mu|$ being $\sigma$-finite.
	Then, we can take a sequence of increasing Borel sets
	$A_1\subset A_2\subset \cdots\to\R$
	satisfying $|\mu|(A_k)<\infty$ for all $k$.
	Let $\mu=\mu_+-\mu_-$ be the Hahn-Jordan decomposition and
	$\mu_+^k:=\mu_+(A_k\cap \cdot)$, $\mu_-^k:=\mu_-(A_k\cap \cdot)$.
	For each $k$,
	by Fubini's theorem and (\ref{eq:lem2-3-1}), we have
	\[
		\int_\R\int_\R K(x-y)
		\dd(\mu_+^k-\mu_-^k)(x)\dd(\mu_+^k-\mu_-^k)(y)
		=\int_\R\left|\int_\R e^{-ikx}\dd(\mu_+^k-\mu_-^k)\right|^2\dd\alpha(\omega)\ge0.
	\]
	This can be rewritten as
	\begin{align*}
		\int_\R\int_\R K(x-y)
		\dd\mu_+^k(x)\dd\mu_+^k(y)
		\ + & \int_\R\int_\R K(x-y)
		\dd\mu_-^k(x)\dd\mu_-^k(y)\\
		&\ge2\int_\R\int_\R K(x-y)
		\dd\mu_+^k(x)\dd\mu_-^k(y)
	\end{align*}
	and the monotone convergence theorem leads to the desired inequality,
	as, for example,
	\[
		\int_\R\int_\R K(x-y)
		\dd\mu_+^k(x)\dd\mu_-^k(y)
		=\int_\R\int_\R 1_{A_k\times A_k}(x, y)K(x-y)\dd\mu_+(x)\dd\mu_-(y)
	\]
	is valid.

	Let us consider the case $K(0)=\infty$.
	In this case, $K$ is continuous on $(0, \infty)$
	and has a limit $\lim_{x\searrow 0}K(x)$
	For any $\ve>0$,
	define
	\[
		K_\ve(x):=\frac1\ve\int_0^\ve K(|x|+z)\dd z,\quad x\in\R.
	\]
	Then, by $K\in L^1(\R)$, $K$ is bounded everywhere by $\ve^{-1}\| K\|_{L^1}$.
	Moreover, $K_\ve$ is still convex,
	such that $K_\ve(x-y)$ is positive semi-definite in measure.
	Now, the continuity of $K$ leads to
	\[
		K_\ve(x)
		=\int_0^1K(|x|+\ve z) \dd z
		\nearrow K(|x|)=K(x)\quad (\ve\searrow0)
	\]
	by the monotone convergence theorem.
	Applying the monotone convergence theorem to both sides of (\ref{eq:def2-1}) with $K=K_\ve$,
	we obtain the conclusion.
\end{proof}

The function $K=-\log\left|\tanh\left(\frac\pi{4d}\cdot\right)\right|$ satisfies the condition of 
Lemma \ref{lem2-3}.
Thus, we can observe the optimization problem
\begin{align}
	(\mathrm{P})\quad
	\begin{array}{lr}\text{minimize}&\displaystyle
	\int_\R\int_\R K(x-y)\dd\mu(x)\dd\mu(y) + 2\int_\R Q(x)\dd\mu(x)
	\nonumber\\
	\text{subject to}&\mu\in\mathcal{M}(\R, n)
	\end{array}
\end{align}
as convex quadratic programming.
We can analogously make the dual problem to the finite-dimensional case in
\citeasnoun{dorn1960duality}, as
\begin{align}
	(\mathrm{D})\quad
	\begin{array}{lr}\text{maximize}&\displaystyle -\int_\R\int_\R K(x-y)\dd\nu(x)\dd\nu(y)
	+ 2ns\nonumber\\
	\text{subject to}&\displaystyle \nu\in \S_K,\ s-\int_\R K(\cdot-y)\dd\nu(y)\le Q.
	\end{array}
\end{align}
Note that this is a rigorous version of problem (D) in \eqref{eq:pre_problem_D}. 
It should be noted here that we have not justified (D) as a formal (topologically) dual problem.
There are arguments limited to the optimization of Radon measure over compact space
\cite{ohtsuka1966a,ohtsuka1966b,wu2001}.
While they are on quadratic programming problems,
there exist more general theories on duality,
such as \citeasnoun{isii1964},
von Neumann's minimax theorem \cite{neumann1928,sion1958general}
and Fenchel-Rockafellar duality theorem \cite{rockafellar1966extension,villani2003topics}.
However, as it is essential that our duality can treat infinite measure $\nu$
with unbounded support
(we indeed later use such a measure as a dual feasible solution),
it is difficult to just apply existing studies and check all the conditions
for (D) to be a topologically dual problem.
Therefore, we here do not go deeper in this aspect,
but just prove the assertion of Theorem \ref{thm2-4}.
This assertion is sufficient to derive a lower bound
of the optimal value of (P),
which is our objective.

In the following, we demonstrate that the weak duality and strong duality are
still valid in this infinite-dimensional primal-dual pair.
It should be noted that $s=0$, $\nu\equiv0$ is a trivial feasible solution of (D)
such that there exists an optimal value of (D).

\begin{thm}\label{thm2-4}
	The optimal value of (D) is equal to the optimal value of (P).
\end{thm}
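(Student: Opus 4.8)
The plan is to prove the two inequalities separately: first \textbf{weak duality}, that every dual-feasible value is at most every primal-feasible value, by a direct computation exploiting the positive semi-definiteness of $K$; then \textbf{strong duality}, by exhibiting a single dual-feasible pair whose objective equals the primal optimum $I_n^{\mathrm{C}}(\mu_n^*)$, using the variational characterization of $\mu_n^*$ from Proposition \ref{prop2-2}.

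For weak duality, fix any primal-feasible $\mu\in\M(\R,n)$ and dual-feasible $(\nu,s)$. We may assume $\iint K\dd\mu\dd\mu<\infty$, since otherwise the primal objective is $+\infty$ and nothing is to prove; together with $\nu\in\S_K$ this places $\mu-\nu$ in $\S_K$ (the relevant cross self-energies being finite by the Cauchy--Schwarz inequality implied by Lemma \ref{lem2-3}). Then $\iint K(x-y)\dd(\mu-\nu)(x)\dd(\mu-\nu)(y)\ge0$, which rearranges to $\iint K\dd\mu\dd\mu\ge 2\iint K\dd\mu\dd\nu-\iint K\dd\nu\dd\nu$. Next, integrating the dual constraint $Q(x)\ge s-\int_\R K(x-y)\dd\nu(y)$ against $\mu$ and using $\mu(\R)=n$ gives $\int_\R Q\dd\mu\ge ns-\iint K\dd\mu\dd\nu$. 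Substituting both estimates into the primal objective $\iint K\dd\mu\dd\mu+2\int_\R Q\dd\mu$ cancels the cross terms $\iint K\dd\mu\dd\nu$ exactly and leaves $-\iint K\dd\nu\dd\nu+2ns$, the dual objective. Hence the optimal value of (D) is at most that of (P).

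For strong duality, I would take the dual point $\nu=\mu_n^*$ and $s=F_{K,Q}^\mathrm{C}(n)/n$, the optimal value of (CT). Admissibility $\mu_n^*\in\S_K$ follows because $I_n^{\mathrm{C}}(\mu_n^*)<\infty$ forces finite self-energy. The variational (Frostman-type) conditions for the weighted equilibrium measure \cite{saff1997,tanaka2018design} assert that the effective potential $\int_\R K(\cdot-y)\dd\mu_n^*(y)+Q$ is bounded below by the constant $F_{K,Q}^\mathrm{C}(n)/n$ everywhere on $\R$ and equals it on $\mathrm{supp}\,\mu_n^*$; the global lower bound is precisely the dual constraint $s-\int_\R K(\cdot-y)\dd\mu_n^*(y)\le Q$, so $(\mu_n^*,s)$ is dual-feasible. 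Integrating the equality-on-support against $\mu_n^*$ yields $\iint K\dd\mu_n^*\dd\mu_n^*+\int_\R Q\dd\mu_n^*=n\cdot F_{K,Q}^\mathrm{C}(n)/n=F_{K,Q}^\mathrm{C}(n)$; eliminating $\int_\R Q\dd\mu_n^*$ via this identity from the dual objective $-\iint K\dd\mu_n^*\dd\mu_n^*+2ns$ reproduces $\iint K\dd\mu_n^*\dd\mu_n^*+2\int_\R Q\dd\mu_n^*=I_n^{\mathrm{C}}(\mu_n^*)$, the primal optimum. Combined with weak duality, this forces the two optimal values to coincide.

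The main obstacle I anticipate is justifying the \emph{global} variational inequality—that the effective potential stays above $F_{K,Q}^\mathrm{C}(n)/n$ at every point of $\R$, not merely quasi-everywhere. Since $K=-\log|\tanh(\tfrac{\pi}{4d}\cdot)|$ has only an integrable logarithmic singularity at the origin and $Q=-\log w$ is continuous and convex, the effective potential should be continuous and the equilibrium conditions should hold with no exceptional set; making this precise, and confirming that $\mu_n^*$ genuinely lies in $\S_K$ so the dual quadratic form is well defined, is where the care is required. The remaining manipulations, being the cancellation of the cross terms, are routine.
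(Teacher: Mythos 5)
Your proposal is correct and takes essentially the same route as the paper: weak duality via non-negativity of the self-energy of $\mu-\nu$ (the paper writes this out explicitly as $\kk{\mu+\nu_-}{\mu+\nu_-}+\kk{\nu_+}{\nu_+}-2\kk{\mu+\nu_-}{\nu_+}\ge0$, which is precisely your $\S_K$-formulation after grouping), together with integrating the dual constraint against $\mu$, and strong duality via the identical dual point $(\mu_n^*,\,F_{K,Q}^{\mathrm{C}}(n)/n)$. The global variational inequality you flag as the main obstacle is exactly what the paper imports from Theorem 2.4 of \citeasnoun{tanaka2018design}, which already asserts $\int_\R K(x-y)\dd\mu_n^*(y)+Q(x)\ge F_{K,Q}^{\mathrm{C}}(n)/n$ for \emph{every} $x\in\R$ (with equality on $\mathrm{supp}\,\mu_n^*$), so no additional quasi-everywhere argument is needed.
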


\begin{proof}
	First, we present the weak duality.
	Let $\mu$ and $(\nu, s)$ be feasible solutions of (P) and (D), respectively,
	and $\nu=\nu_+-\nu_-$ be the Hahn-Jordan decomposition.
	If we write $\langle\alpha, \beta\rangle_K
	:=\int_\R\int_\R K(x-y)\dd\alpha(x)\dd\beta(y)$
	for measures $\alpha$ and $\beta$,
	\[
		\kk\nu\nu=\kk{\nu_+}{\nu_+}+\kk{\nu_-}{\nu_-}-2\kk{\nu_+}{\nu_-}
	\]
	holds true.
	Because $\kk\mu\mu, \kk{\nu_+}{\nu_+}, \kk{\nu_-}{\nu_-}<\infty$,
	we have 
	$\kk\mu{\nu_+}, \kk\mu{\nu_-}, \kk{\nu_+}{\nu_-}<\infty$
	by $K$'s positive semi-definiteness in measure.
	Therefore, we have
	\begin{align*}
		&\left(
			\int_\R\int_\R K(x-y)\dd\mu(x)\dd\mu(y) + 2\int_\R Q(x)\dd\mu(x)
		\right)
		-\left(
			-\int_\R\int_\R K(x-y)\dd\nu(x)\dd\nu(y) + 2ns
		\right)\\
		&=\kk\mu\mu+\kk\nu\nu+2\int_\R (Q(x)-s)\dd\mu(x)\\
		&\ge\kk\mu\mu+\left(
			\kk{\nu_+}{\nu_+}+\kk{\nu_-}{\nu_-}-2\kk{\nu_+}{\nu_-}
		\right)
		+2\int_\R \left(-\int_\R K(x-y)\dd\nu(y)\right)\dd\mu(x)\\
		&=\kk\mu\mu+\kk{\nu_+}{\nu_+}+\kk{\nu_-}{\nu_-}-2\kk{\nu_+}{\nu_-}
		-2\kk\mu{\nu_+}+2\kk\mu{\nu_-}\\
		&=\kk{\mu+\nu_-}{\mu+\nu_-}+\kk{\nu_+}{\nu_+}-2\kk{\mu+\nu_-}{\nu_+}\ge0
	\end{align*}
	by the positive semi-definiteness in measure.
	This indicates the weak duality.
	
	To prove the strong duality,
	we construct the optimal solution of (D) using that of (P).
	By Theorem 2.4 in \citeasnoun{tanaka2018design},
	$\mu^*$, the optimal solution of (P), satisfies
	\begin{align}
		\int_\R K(x-y)\dd\mu^*(y)+Q(x)\ge \frac{F_{K, Q}^\mathrm{C}(n)}n
		\label{eq:thm2-4-1}
	\end{align}
	for all $x\in\R$.
	Now, $\mu^*$ and $n^{-1}F^\mathrm{C}_{K, Q}(n)$ is a feasible solution for (D).
	Moreover, the equality of (\ref{eq:thm2-4-1}) is valid on the support of $\mu^*$,
	such that we have
	\begin{align*}
		&-\int_\R\int_\R K(x-y)\dd\mu^*(x)\dd\mu^*(y)+2n\frac{F^\mathrm{C}_{K, Q}(n)}n\\
		&=-\int_\R\int_\R K(x-y)\dd\mu^*(x)\dd\mu^*(y)
		+2\int_\R\left(Q(x)+\int_\R K(x-y)\dd\mu^*(y)\right)\dd\mu^*(x)\\
		&=\int_\R\int_\R K(x-y)\dd\mu^*(x)\dd\mu^*(y)+2\int_\R Q(x)\dd\mu^*(x).
	\end{align*}
	This shows the strong duality.
\end{proof}



\section{Proof of Theorem \ref{main2}}\label{sec5}

We can now give a lower bound of $F_{K, Q}^\mathrm{C}(n)$ by using the dual problem (D)
and prove Theorem \ref{main2}.
Let $\alpha>0$ be a constant and $f$ be the inverse Fourier transform of
\[
	\left(\F[f](\omega)=\right)\quad\frac{\omega}{\pi\tanh(d\omega)}
	\int_{-\alpha}^\alpha \left( Q(\alpha) - Q(x) \right) e^{-i\omega x}\dd x
\]
Along with this, $f$ is $L^2$-integrable by Theorem 4.4 in \citeasnoun{tanaka2017potential}.
Here, the Fourier transform of a function $g\in L^1(\R)\cap L^2(\R)$ is defined by
\[
	\F[g](\omega):=\int_\R g(x)e^{-i\omega x}\dd x
\]
and for the whole space $L^2(\R)$,
$\F[\cdot]$ is defined as the continuous extension of $\F[\cdot]|_{L^1\cap L^2}$.
Because $Q(x)$ is even by the assumption,
$f$ is an inverse Fourier transform of an even real function,
so that $f$ itself is an even real function.
Then, the formula \citeaffixed{fourier_table}{p.43, 7.112 in}
\[
	\F\left[
		\log\left|\tanh\left(\frac\pi{4d}\cdot\right)\right|
	\right](\omega)
	=-\frac\pi\omega\tanh(d\omega)
\]
leads to the (almost everywhere) equation
\begin{align}
	\F\left[\int_\R K(x-y)f(y)\dd y\right](\omega)
	=\F[K](\omega)\cdot\F[f](\omega)
	=\int_{-\alpha}^\alpha \left( Q(\alpha) - Q(x) \right) e^{-i\omega x}\dd x,
	\label{eq:2-1}
\end{align}
where $K\in L^1(\R)\cap L^2(\R)$ (see Appendix A)
and $f\in L^2(\R)$ are used for the justification of the first equality.
The integrability of $K(x-\cdot)f(\cdot)$ comes from $K, f\in L^2(\R)$ and
by Minkowski's integral inequality \citeaffixed[Theorem 202]{inequalities}{see, e.g.,}, we have
\begin{align*}
	\left\|\int_\R K(\cdot-y)f(y)\dd y\right\|_{L^2}
	&=\left\|\int_\R K(y)f(\cdot-y)\dd y\right\|_{L^2}\\
	&\le\int_\R K(y)\| f(\cdot-y)\|_{L^2}\dd y\\
	&=\|K\|_{L^1}\|f\|_{L^2}<\infty.\\
\end{align*}
Considering the inverse Fourier transform of (\ref{eq:2-1}),
we also have
\[
	\int_\R K(x-y)f(y)\dd y=1_{[-\alpha, \alpha]}(x)(Q(\alpha)-Q(x)).
\]
It should be noted that $f(x)\dd x \in \S_K$ follows from the inequality
	\[
		\int_\R\int_\R K(x-y)|f(x)f(y)|\dd x\dd y\le
		\|K*f\|_{L^2}\|f\|_{L^2}\le \|K\|_{L^1}\|f\|_{L^2}^2<\infty.
	\]
These two relations imply that
$(f(x)\dd x, Q(\alpha))$ is a feasible solution of (D).
We can now evaluate the value of the objective function of (D).
Let us define
\begin{align}
	F(\alpha):=-\int_\R\int_\R K(x-y)f(x)f(y)\dd x\dd y+2nQ(\alpha).
	\label{eq:2-2}
\end{align}
Because the first term can be considered as the inner product of
$K*f$ and $f$ in $L^2(\R)$,
it can be computed through the Fourier transform as
\begin{align}
	&\int_\R\int_\R K(x-y)f(x)f(y)\dd x\dd y\nonumber\\
	&=\frac1{2\pi}\int_\R\left(
		\frac{\omega}{\pi\tanh(d\omega)}
	\int_{-\alpha}^\alpha \left( Q(\alpha) - Q(x) \right) e^{-i\omega x}\dd x\ 
	\overline{\int_{-\alpha}^\alpha \left( Q(\alpha) - Q(x) \right) e^{-i\omega x}\dd x}\right)
	\dd\omega\nonumber\\
	&=\frac1{2\pi^2}\int_\R
		\frac{\omega}{\tanh(d\omega)}
	\left|\int_{-\alpha}^\alpha \left( Q(\alpha) - Q(x) \right) e^{-i\omega x}\dd x\right|^2
	\dd\omega.
	\label{eq:2-3}
\end{align}
Let $G(\alpha)$ be the value of the right-hand side.
$G(\alpha)$ can be decomposed into two parts, which are defined as
\[
	G_1(\alpha):=\frac1{2\pi^2}\int_{-1}^1\frac{\omega}{\tanh(d\omega)}
	\left|\int_{-\alpha}^\alpha \left( Q(\alpha) - Q(x) \right) e^{-i\omega x}\dd x\right|^2
	\dd\omega
\]
and
\[
	G_2(\alpha):=\frac1{2\pi^2}\int_{[-1, 1]^c}\frac{\omega}{\tanh(d\omega)}
	\left|\int_{-\alpha}^\alpha \left( Q(\alpha) - Q(x) \right) e^{-i\omega x}\dd x\right|^2
	\dd\omega.
\]
We first evaluate $G_1$.
Because the function $\omega/\tanh(d\omega)$ is monotonically increasing in $[0, \infty)$
(see the proof of Lemma \ref{lem:1-1}),
we have
\begin{align}
	G_1(\alpha)
	&\le\frac1{\pi\tanh(d)}\cdot \frac1{2\pi}\int_\R
	\left|\int_{-\alpha}^\alpha \left( Q(\alpha) - Q(x) \right)
	e^{-i\omega x}\dd x\right|^2\dd\omega \nonumber\\
	&=\frac1{\pi\tanh(d)}\|1_{[-\alpha, \alpha]}(x)(Q(\alpha)-Q(x))\|_{L^2}^2\nonumber\\
	&\le \frac2{\pi\tanh(d)}\ \alpha Q(\alpha)^2.
\end{align}
Next, we similarly evaluate $G_2$.
By integration by parts, we get
\[
	\omega\int_{-\alpha}^\alpha(Q(\alpha)-Q(x))e^{-i\omega x}\dd x
	=-\frac1i\int_{-\alpha}^\alpha Q'(x)e^{-i\omega x}\dd x.
\]
Thus, we have
\begin{align}
	G_2(\alpha)&=\frac1{2\pi^2}\int_{[-1, 1]^c}\frac1{\omega\tanh(d\omega)}
	\left|
		\int_{-\alpha}^\alpha Q'(x)e^{-i\omega x}\dd x
	\right|^2\dd\omega\nonumber\\
	&\le \frac1{\pi\tanh(d)}\|1_{[-\alpha,\alpha]}(x)Q'(x)\|_{L^2}^2\nonumber\\
	&\le \frac2{\pi\tanh(d)}\ \alpha Q'(\alpha)^2.
\end{align}
Finally, we reach the evaluation
\[
	G(\alpha)\le \frac{2\alpha}{\pi\tanh(d)}\left(Q(\alpha)^2+Q'(\alpha)^2\right),
	\quad
	F(\alpha)\ge 2nQ(\alpha)-\frac{2\alpha}{\pi\tanh(d)}\left(Q(\alpha)^2+Q'(\alpha)^2\right).
\]
By letting $\alpha_n$ satisfy
\[
	\frac{2\alpha_n}{\pi\tanh(d)}\frac{Q(\alpha_n)^2+Q'(\alpha_n)^2}{Q(\alpha_n)}\le n,
\]
we get $nQ(\alpha_n)$ as a lower bound for the optimal value of (P).
For such $\alpha_n$,
we finally have
\[
	nQ(\alpha_n)
	\le I^\mathrm{C}_{K, Q}(\mu^*)
	\le 2F_{K, Q}^\mathrm{C}(n)
\]
and this is equivalent to the assertion of Theorem \ref{main2}.

\section{Examples of convergence rates for several $Q(x)$'s}\label{sec:numerical_examples}

Although the asymptotic rates given in \citeasnoun[Section 4.3]{tanaka2017potential}
are derived through mathematically informal arguments,
we here demonstrate that those rates roughly coincide with
the bound in Theorem \ref{main2}.

\begin{eg}\label{eg:SE}
(The case $w$ is a single exponential)
Consider the case
\[
	w(x)=\exp\left(-(\beta|x|)^\rho\right),\quad
	Q(x)=(\beta|x|)^\rho,
\]
for $\beta>0$ and $\rho\ge1$.
In this case, for a sufficiently large $\alpha$ (satisfying $\alpha\ge\rho$),
we have
\[
	\frac{2\alpha}{\pi\tanh(d)}\frac{Q(\alpha)^2+Q'(\alpha)^2}{Q(\alpha)}
	=\frac{2\alpha}{\pi\tanh(d)}\frac{(\beta\alpha)^{2\rho}
	+(\beta\rho)^2(\beta\alpha)^{2(\rho-1)}}{(\beta\alpha)^\rho}
	\le \frac{4\beta^\rho\alpha^{\rho+1}}{\pi\tanh(d)}
\]
and $\alpha_n$ can be taken as
\begin{align}
	& \alpha_n=\left(\frac{\pi\tanh(d)}{4\beta^\rho}n\right)^{\frac1{\rho+1}},
	\notag \\
	& \frac{Q(\alpha_n)}2
	=\frac12\beta^\rho\left(\frac{\pi\tanh(d)}{4\beta^\rho}n\right)^{\frac\rho{\rho+1}}
	\qquad \left( =\Theta\left( \beta^{\frac\rho{\rho+1}}n^{\frac\rho{\rho+1}}\right) \right),
	\label{eq:SE_th_rate}
\end{align}
for sufficiently large $n$.
This rate roughly coincides with (4.37) in \citeasnoun{tanaka2017potential}.
\end{eg}

\begin{eg}\label{eg:DE}
(The case $w$ is a double exponential)
Consider the case
\[
	w(x)=\exp\left(-\beta\exp(\gamma|x|)\right),
	\quad
	Q(x)=\beta\exp(\gamma|x|),
\]
for $\beta,\gamma>0$.
In this case,
\[
	\frac{2\alpha}{\pi\tanh(d)}\frac{Q(\alpha)^2+Q'(\alpha)^2}{Q(\alpha)}
	=\frac{2\alpha\beta(1+\gamma^2)\exp(\gamma\alpha)}{\pi\tanh(d)}
\]
is valid.
Let $\alpha_n>0$ satisfy that the right-hand side is equal to $n$.
Then, we have
\[
	\gamma\alpha_n=W\left(
		\frac{\pi\tanh(d)\gamma}{2\beta(1+\gamma^2)}n
	\right)
	\qquad \left( \sim\log\left(\frac\gamma{\beta(1+\gamma^2)}n\right) \right),
\]
where $W$ is Lambert's W function, i.e., the inverse of $x\mapsto xe^x$.
Using this, we get
\begin{align}
	\frac{Q(\alpha_n)}2
	&=\frac{\beta}{2\gamma\alpha_n}\cdot\gamma\alpha_n\exp(\gamma\alpha_n)
	=\frac{\beta}{2\gamma\alpha_n}\frac{\pi\tanh(d)\gamma}{2\beta(1+\gamma^2)}n
	=\frac{\pi\tanh(d)}{4(1+\gamma^2)}\frac{n}{\alpha_n} \notag \\
	& = \frac{\pi\tanh(d) \gamma}{4(1+\gamma^2)}\frac{n}{W\left(
		\frac{\pi\tanh(d)\gamma}{2\beta(1+\gamma^2)}n
	\right)}  
	\qquad
	\left(
	\sim\frac{\pi\tanh(d)\gamma}{4(1+\gamma^2)}
	\frac{n}{\log\left( \frac\gamma{\beta(1+\gamma^2)}n\right)}
	\right).
	\label{eq:DE_th_rate}
\end{align}
This rate roughly coincides with the asymptotic order (4.44)
in \citeasnoun{tanaka2017potential}
for each fixed constant $\gamma$.
\end{eg}

\begin{remark}
We choose the weight functions in Examples \ref{eg:SE} and \ref{eg:DE} for simplicity
although they are not (necessarily) analytic in the strip region $\mathcal{D}_{d}$ for any $d > 0$.
This is because we just need their asymptotic properties for finding $\alpha_{n}$. 
\end{remark}



\section{Conclusion}\label{sec:conclusion}
In this study,
we analyzed the approximation method proposed by \citeasnoun{tanaka2018design}
over weighted Hardy spaces $\H^\infty(\D_d, w)$.
We provided (1) proof of the fact that the approximation formulas are nearly optimal
from the viewpoint of minimum worst-case error $E_n^{\min}(\H^\infty(\D_d, w))$;
and (2) upper bounds of $E_n^{\min}(\H^\infty(\D_d))$ to evaluate the convergence rates
of approximation errors with $n\to\infty$.
To obtain (2),
we introduced the concept ``positive semi-definite in measure''
and by using this, provided a lower bound for $F_{K, Q}^\mathrm{C}(n)$.
We also compared the given bounds with those mentioned in the study by \citeasnoun{tanaka2017potential},
and demonstrated that they have the same convergence rate with $n\to\infty$.

The new bounds do not indicate that the approximation formulas in \citeasnoun{tanaka2018design} are optimal. 
Another method to bound the error is recently considered by \citeasnoun{van2022convergence}, 
although their bound do not show the optimality, either. 
We need tighter bounds to show the optimality, which may require more sophisticated analysis. 
We leave such analysis to future work. 


\section*{Acknowledgements}
The authors are grateful to Ryunosuke Oshiro for his comment on signed measures.
This study was supported by the
Japan Society for the Promotion of Science with KAKENHI (17K14241 to K.T.).

\bibliographystyle{dcu}
\bibliography{cite1}

\renewcommand{\appendixname}{Appendix }

\appendix

\def\thesection{Appendix \Alph{section}}

\section{Proof of $K \in L^1(\R)\cap L^2(\R)$}

It suffices to consider the case $d=\pi/4$,
i.e., $K(x)=-\log|\tanh(x)|$,
and prove
\[
	\int_0^\infty(-\log\tanh(x))\dd x < \infty \quad \text{and} \quad
	\int_0^\infty(-\log\tanh(x))^2\dd x < \infty
\]
as $K$ is even.
By variable transformation $y=\tanh(x)$
$\left(\Leftrightarrow\displaystyle x=\frac12\log\frac{1+y}{1-y}\right)$,
we have
\[
	\int_0^\infty(-\log\tanh(x))\dd x
	=\int_0^1(-\log y)\frac1{1-y^2}\dd y.
\]
Additionally, by setting $z=-\log y$,
we get
\[
	\int_0^1(-\log y)\frac1{1-y^2}\dd y
	=\int_0^\infty \frac{ze^{-z}}{1-e^{-2z}}\dd z.
\]
Performing the same variable transformations,
we have
\[
	\int_0^\infty(-\log\tanh(x))^2\dd x
	=\int_0^\infty\frac{z^2e^{-z}}{1-e^{-2z}}\dd z.
\]
Because $z\ge z^2$ over $(0, 1]$ and $z\le z^2$ over $[1, \infty)$,
it suffices to show that
\[
	\int_0^1 \frac{ze^{-z}}{1-e^{-2z}}\dd z<\infty \quad \text{and} \quad
	\int_1^\infty\frac{z^2e^{-z}}{1-e^{-2z}}\dd z<\infty.
\]
For the former,
because $e^{2z}-1\ge 2z$ is valid,
we have
\[
	\int_0^1\frac{ze^{-z}}{1-e^{-2z}}\dd z
	=\int_0^1\frac{ze^z}{e^{2z}-1}\dd z
	\le\int_0^1\frac{e^z}2\dd z<\infty.
\]
For the latter,
we have
\[
	\int_1^\infty\frac{z^2e^{-z}}{1-e^{-2z}}\dd z
	\le\frac1{1-e^{-2}}\int_1^\infty z^2e^{-z}\dd z
	\le\frac1{1-e^{-2}}\Gamma(3)<\infty.
\]
Therefore,
we finally get the result $K\in L^1(\R)\cap L^2(\R)$.


\makeatletter

\def\pct{\expandafter\@gobble\string\%}

\immediate\write\@auxout{\pct\space This is a test line.\pct }

\end{document}